\documentclass[11pt, reqno]{amsart}

\usepackage[top=3.75cm, bottom=3cm, left=3cm, right=3cm]{geometry}
\usepackage{amsthm}
\usepackage{mathscinet}
\usepackage{amsmath}
\usepackage{amsfonts}
\usepackage{amssymb}
\usepackage{mathtools}
\usepackage{bm}
\usepackage{color}
\usepackage{enumitem}
\usepackage{tikz}  
\usepackage{graphicx}
\usepackage{scalerel}
\usepackage{comment}
\usepackage[mathscr]{eucal}
\usepackage{stmaryrd}
\usepackage{appendix}
\usepackage{indentfirst}
\usepackage{tikz-cd}
\usepackage[colorlinks=true, citecolor=citation, urlcolor=citation, linkcolor=reference, linktocpage = true]{hyperref}
\usepackage{cleveref}

\definecolor{citation}{rgb}{0,.40,.80}
\definecolor{reference}{rgb}{.80,0,.40}

\numberwithin{equation}{section}

\theoremstyle{plain}
\newtheorem{theorem}{Theorem}[section]

\newtheorem{lemma}[theorem]{Lemma}
\newtheorem{proposition}[theorem]{Proposition}
\newtheorem{corollary}[theorem]{Corollary}

\theoremstyle{definition}
\newtheorem{definition}[theorem]{Definition}

\newtheorem{remark}[theorem]{Remark}

\setlist[itemize]{topsep=5pt,itemsep=3pt}
\setlist[enumerate]{topsep=5pt,itemsep=3pt}

\DeclareMathOperator{\cEnd}{\mathcal{E}\kern -1.2pt {\it nd}}

\newcommand{\cO}{\mathcal{O}}

\newcommand{\rH}{\mathrm{H}}

\newcommand{\fm}{\mathfrak{m}}

\newcommand{\bA}{\mathbf{A}}
\newcommand{\bC}{\mathbf{C}}

\newcommand{\bZ}{\mathbf{Z}}
\newcommand{\bP}{\mathbf{P}}

\newcommand{\pfister}[1]{\langle\!\langle #1\rangle\!\rangle}

\newcommand{\dra}{\dashrightarrow}

\definecolor{proofreadgreen}{rgb}{0,0.45,0}
\newcommand{\proofreadcomment}[3]{%
  \par\smallskip
  {\normalfont\small\noindent
   {\color{blue}\textbf{Comment #1.} #2}%
   \if\relax\detokenize{#3}\relax\else
     \par\noindent{\color{proofreadgreen}\textbf{Suggested fix #1.} #3}%
   \fi\par}
  \smallskip
}

\begin{document}

	\title[The rationality problem for hypersurfaces of degree at least five]{The rationality problem for hypersurfaces \\ of degree at least five}
    \author{James Hotchkiss}

    \begin{abstract}
        We prove that a very general hypersurface of degree at least five and any positive dimension does not have a decomposition of the diagonal, over an uncountable algebraically closed field of characteristic not $2$.
        In particular, such hypersurfaces are not stably or retract rational.
        For the proof, we construct (in each dimension $\geq 8$) a certain quintic hypersurface with a nonzero unramified cohomology class, and which is rationally fibered by quadrics over a lower-dimensional projective space.
        From our construction, the main result follows using methods developed by Schreieder.
    \end{abstract}
    
    \maketitle

    \section{Introduction}

    Let $k$ be an algebraically closed field.
    A smooth projective variety $X$ over $k$ is \emph{rational} if it is birational to projective space, \emph{stably rational} if $X \times \bP^s_k$ is rational for $s \gg 0$, and \emph{retract rational} if there is a dominant rational map $\bP^s_k \dra X$ with a rational section.
    We note that
    \begin{equation*}
        \textrm{rational} \implies \textrm{stably rational} \implies \textrm{retract rational}.
    \end{equation*}
    All three properties imply that $X$ has a decomposition of the diagonal. 
    A basic problem in algebraic geometry is the question of which hypersurfaces in projective space are rational, stably rational, or retract rational.
    Our main result is the following:
    
     \begin{theorem}
    \label{thm:main-intro}
        Suppose that $k$ is uncountable, and the characteristic of $k$ is not $2$. 
        The very general hypersurface $X$ of degree $\geq 5$ and any positive dimension does not have a decomposition of the diagonal.
        In particular, $X$ is not retract rational, stably rational, or rational.
    \end{theorem}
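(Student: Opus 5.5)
The plan follows the strategy indicated in the abstract: a degeneration argument in the style of Schreieder, using a special quintic that carries a nonzero unramified class and is fibered in quadrics. Three reductions come first.

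\emph{Reduction to one degree and many dimensions.} Decomposition of the diagonal specializes: if the very general fiber of a family has one, then so does every fiber, at least in the weak form relevant to Schreieder's obstruction. Moreover, a degree $d+1$ hypersurface degenerates to the union of a degree $d$ hypersurface and a hyperplane. Combining these, Schreieder's standard inductive argument reduces the claim for degree $d \geq 5$ to the cases where $d$ is small relative to $n$. Low dimensions, where $d \geq n+2$ and $X$ is not uniruled, are handled by the nonvanishing of $\rH^0(X, \omega_X)$. Non-uniruled varieties admit no decomposition of the diagonal. The remaining range, $d \leq n+1$, must be covered by the quadric-bundle method.

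\emph{Main construction.} In each dimension $N \geq 8$, I will construct a quintic hypersurface $Y \subset \bP^{N+1}$ (possibly singular) together with a rational map $Y \dra \bP^r$ whose generic fiber is a smooth quadric over $K = k(\bP^r)$. I will then exhibit a nonzero unramified class $\alpha \in \rH^i_{nr}(k(Y)/k, \bZ/2)$ that restricts to zero on the exceptional loci of a suitable resolution. Concretely, I will:
\begin{enumerate}
\item Write $Y$ as $\sum_{i} a_i(x) y_i^2 = 0$ in suitable coordinates, with the $a_i$ forms on $\bP^r$ of degree chosen so that the total degree is $5$. Odd degree forces some weighting trick, for example multiplying the quadratic form by a linear factor or using a Hassett--Pirutka--Tschinkel style equation adapted to degree $5$.
\item Arrange that the form $\langle a_0, \dots \rangle$ contains a Pfister neighbour of $\pfister{a_1, \dots, a_m}$ over $K$.
\item Take $\alpha$ to be the pullback of the symbol $(a_1, \dots, a_m)$ twisted by a suitable class. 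By Schreieder's purity criteria for quadric fibrations, $\alpha$ is unramified exactly when the discriminant and residue conditions hold along all divisors of $\bP^r$. Nonvanishing follows from Arason's theorem on the kernel of restriction to the function field of a Pfister quadric.
\item Check Schreieder's vanishing condition on the singular locus. This is typically automatic when the degeneration locus of the quadric bundle lies over where $\alpha$ restricts trivially.
\end{enumerate}

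\emph{Conclusion.} Schreieder's degeneration theorem then shows that a very general quintic in $\bP^{N+1}$ has no decomposition of the diagonal for every $N \geq 8$. The remaining dimensions follow from the inductive degeneration in degree and, for small dimensions, from the canonical-class argument together with known results for $d \geq 5$, $n \leq 7$. The characteristic $\neq 2$ hypothesis is used throughout for $\bZ/2$ cohomology and the quadratic form theory.

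The main obstacle is the construction step. It requires finding explicit degree-$5$ forms whose associated quadric bundle has a nonzero unramified class with the precise residue cancellations, with the parity mismatch between degree $5$ and quadric fibrations being the delicate point. I would first try equations of the form $\ell(x)\, q(x,y) + g(x) = 0$, with $\ell$ chosen so that the fibration is in quadrics, and then tune the ranks by brute residue computations.
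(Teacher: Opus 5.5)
Your proposal has a genuine gap. The step you call ``the main obstacle'' is the entire new content of the theorem, and you do not carry it out. Everything else in your outline is either standard or already known: for $d=5$, the cases $n\le 3$ follow from $\rH^0(X,\omega_X)\neq 0$, and $3\le n\le 9$ follow from Schreieder's bound. So an ansatz plus ``tune the ranks by brute residue computations'' proves nothing new.

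The missing idea, as the paper realizes it, is quite specific. Work over $K_r=k(z_1,\dots,z_r)$, $r\ge 3$, with $\alpha_r=(z_1,\dots,z_r)$ and $\Pi_r=\pfister{z_1,\dots,z_r}$. One perturbs the subform $\langle 1,z_1,\dots,z_r\rangle\subset\Pi_r$ to $\sigma_t=\Pi_r|_{W_t}$, where $W_t=\mathrm{Span}\{v_j+tp_j\}$ and the $p_j=e_I$ with $|I|\in\{2,3\}$ are orthogonal to $W_0$. Then $Q_t$ is the quadric of the augmented form $\langle 1\rangle\perp\sigma_t$, and $H$ is the orthogonal complement of $\langle 1\rangle$. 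With these choices:
\begin{itemize}
\item $\alpha_r$ dies on $K_r(Q_t\cap H)$ because $\sigma_t\subset\Pi_r$.
\item Unramifiedness reduces to $\sigma_t^\perp$ representing $1$ over $K_{r,v}$ at each $v$ where $\alpha_r$ ramifies. This is proved using explicit vectors $s_j\in W_t^\perp$ with $\Pi_r(s_j)=1+\bigl(t^2\Pi_r(v_j^\vee)\Pi_r(p_j)\bigr)^{-1}$, together with valuation inequalities forcing some $v(\Pi_r(v_j^\vee)\Pi_r(p_j))<0$.
\item Nonvanishing for very general $t$ follows from Gabber's specialization to $t=0$, where $\langle 1\rangle\perp\sigma_0\supset\langle 1,1\rangle$ makes $Q_0$ rational.
\item In the basis $v_j+tp_j$ the Gram matrix has entries of degree $\le 3$ in the $z_i$, so the affine equation $1+\sum m_{ij}Y_iY_j=0$ has degree $\le 5$. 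Odd dimensions use $\sigma_t^+=\sigma_t\perp\langle z_1z_3\rangle$ with $r\ge 4$.
\end{itemize}

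Several of your guiding heuristics would also mislead you.
\begin{itemize}
\item The ``parity mismatch'' is not an issue. In the chart complementary to $H$ the equation is inhomogeneous, so total degree $3+2=5$ causes no trouble. The degree bound only appears after a rational change to a non-orthogonal basis, so a diagonal ansatz $\sum a_i(x)y_i^2$ is the wrong place to look.
\item ``Contains a Pfister neighbour'' is not the relevant condition. What is needed (Theorem~\ref{thm:pfister-symbol}, Proposition~\ref{prop:criterion-for-unramified}) is that $q$ is not similar to a subform of $\Pi$ over $K$, but becomes so over $K_v$ for every discrete valuation $v$ of $K$ at which $\alpha$ ramifies.
\item That condition must hold for all such $v$, not only those coming from divisors of $\bP^r$. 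This is why Proposition~\ref{prop:key-prop} is argued for an arbitrary $v$.
\item Most importantly, your step 4 only addresses the degeneracy locus of the quadric bundle. That locus does not dominate the base, so it is handled for free because $\alpha$ is a top-degree class pulled back from the base. The components of $\rho^{-1}(Z^{\mathrm{sing}}\cup(Z\cap H'))$ that do dominate the base lie over the fibrewise hyperplane at infinity, and Schreieder's specialization result needs the class to restrict to zero there. This is exactly condition (2) of a Schreieder triple. It is also what makes the passage from degree $5$ to $d'>5$ via $Z\cup(d'-5)H'$ work, which your reduction step takes for granted.
\end{itemize}

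A minor point: the general claim that non-uniruled varieties admit no decomposition of the diagonal is false; Barlow surfaces are a counterexample. Your low-dimensional case is still fine, because all you need there is $\rH^0(X,\omega_X)\neq 0$.
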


    The rationality problem for very general hypersurfaces has a long history; we briefly sketch it here, and refer to the recent survey \cite{schreieder-icm} for further details.
    First, it is widely believed that the very general hypersurface $X$ of degree $d \geq 3$ and dimension $n \geq 3$ is not retract rational; equivalently, the only very general hypersurfaces which are retract rational are those which are obviously so: hyperplanes, quadrics, and cubic surfaces.
    (Theorem~\ref{thm:main-intro} reduces this question to the case of cubics and quartics.)
    In the last three decades, there has been remarkable progress, which we summarize mostly over $k = \bC$ for simplicity: Koll\'ar \cite{kollar1995} showed that $X$ (assumed here and below to be very general) is not ruled, hence irrational, when $d \geq 2 \lceil (n + 3)/3 \rceil$. 
    Totaro \cite{Totaro2016} applied the specialization method from \cite{Voisin2015, ColliotThelenePirutka2016} to show that $X$ is stably irrational when $d \geq 2 \lceil (n + 2)/3 \rceil$.
    In a breakthrough, Schreieder \cite{schreieder} improved the bound to
    \begin{equation*}
        d \geq \log_2(n) + 2,
    \end{equation*}
    which also holds in positive characteristic not $2$.
    The improvement is based on a powerful version of the specialization method introduced in \cite{Schreieder2019QuadricBundles}, as well as innovative methods from the theory of quadratic forms (for which \cite{HPT2018} was an important precursor).
    There have been more recent improvements to Schreieder's bound by Moe and Nicaise--Ottem \cite{moe, nicaise-ottem} in characteristic $0$, and Lange--Schreieder \cite{lange-schreieder} in characteristic not $2$; in a different vein, it was shown recently in \cite{EngelDeGaayFortmanSchreieder2025} that the very general cubic threefold over the complex numbers is stably irrational, and very recently it was shown in \cite{Ottem2026} that a very general quartic sixfold is not retract rational.
    Theorem~\ref{thm:main-intro} is the first result in which the range of degrees does not depend on the dimension.

    \subsection{Overview of the proof}

    The proof of Theorem~\ref{thm:main-intro} involves  a construction, which we feed into general machinery developed by Schreieder over a series of papers, \cite{Schreieder2019QuadricBundles, schreieder, schreieder-torsion}.
    For the purposes of analyzing both the construction and Schreieder's machinery, we consider the notion of a \emph{Schreieder triple}, which is implicit in Schreieder's work.
    Loosely speaking, a Schreieder triple $(\alpha, Q, H)$ consists of a smooth projective quadric $Q$ over the function field $K$ of a projective space $\bP^r_k$ over $k$, a top-degree symbol $\alpha \in \rH^{r}(K, \bZ/2)$ over $K$ which pulls back to a nonzero cohomology class $\alpha_{K(Q)}$ on $K(Q)$ which is unramified over $k$, and a hyperplane $H$ such that $\alpha_{K(Q)}$ vanishes on $K(Q \cap H)$.

    \subsubsection{General machinery}
    \label{sssec:general-machinery}
    From any Schreieder triple, one can spread out the equation for the quadric to obtain a (potentially highly singular) hypersurface over $k$, with a nonzero unramified cohomology class. 
    Because of the inexplicit nature of the singularities, it is not clear that one can apply the original form of the specialization method from \cite{Voisin2015, ColliotThelenePirutka2016}, which relies on analyzing a resolution of singularities.
    However, a key improvement to the specialization method first introduced in \cite{Schreieder2019QuadricBundles} often allows one to circumvent the problem of computing a resolution.

    It turns out that for an arbitrary Schreieder triple, the methods from \cite{Schreieder2019QuadricBundles, schreieder, schreieder-torsion} apply and allow one to obtain a bound on the retract irrationality of hypersurfaces of a certain degree and dimension, cf. Theorem~\ref{thm:hypersurfaces-from-schr-trip}.
    For this reason, we focus our energy entirely on the problem of constructing Schreieder triples which spread out to singular hypersurfaces of degree $5$.

    \subsubsection{Sketch of the construction} 
    \label{ssec:intro-the-construction}
    For simplicity, we just describe the construction which leads to Theorem~\ref{thm:main-intro} for even-dimensional hypersurfaces; the odd-dimensional case requires a slight modification.
    We begin in an extremely na\"ive fashion.

    We start with the symbol $\alpha_r = (z_1, \dots, z_r) \in \rH^r(K_r, \bZ/2)$ over the field $K_r = k(z_1, \dots,z_r)$, and consider the smooth projective quadric $Q_0$ associated to the quadratic form 
    \begin{equation}
    \label{eq:intro-original-form}
        \langle 1, 1, z_1, \dots, z_r \rangle
    \end{equation}
    over $K_r$.
    The good news is that the pullback of $\alpha_r$ to $K_r(Q_0)$ remains nonzero. 
    Indeed, this follows from the fact that \eqref{eq:intro-original-form} is isotropic, so that $K_r(Q_0)$ is purely transcendental over $K_r$.
    The bad news is that, for essentially the same reasons, the pullback of $\alpha_r$ is ramified, so there is no hope of fitting the pair $(\alpha_r, Q_0)$ into a Schreieder triple.
    
    The key point is that for a parameter $t \in k$, and under the assumption that $r \geq 3$, one may construct a perturbation 
    \begin{equation}
    \label{eq:intro-perturb-form}
        \langle 1, 1 + t^2 f_0(t), z_1 + t^2 f_1(t), \dots, z_r + t^2 f_r(t)\rangle, \quad f_0(t), \dots, f_r(t) \in K_r(t)
    \end{equation}
    of \eqref{eq:intro-original-form}, with each $f_j(t)$ regular at $t=0$, so that if $Q_t$ is the corresponding quadric, then the pullback of $\alpha_r$ to $K_r(Q_t)$ is unramified for $t \neq 0$.
    By a result of Gabber from \cite[Appendix]{CT-examples}, the pullback of $\alpha_r$ to $K_r(Q_t)$ remains nonzero for $t$ very general (since it is nonzero at $t = 0$), and it is not difficult to show that the pair $(\alpha_r, Q_t)$ fits into a Schreieder triple. 

    The final point to address is the degree of the resulting hypersurface. 
    For this, the crucial idea is that for the particular perturbation constructed above, after a  rational change of coordinates, the Gram matrix of \eqref{eq:intro-perturb-form} has entries which are polynomials of degree $\leq 3$ in the variables $z_1, \dots, z_r$.
    In these better coordinates (which in the body of the paper correspond to the basis vectors $v_0 + tp_0, \dots, v_r + tp_r$ of the form $\sigma_t$ considered in \S \ref{sec:construction}), the quadric $Q_t$ spreads out to a singular quintic hypersurface in $\bP^{2r + 1}_k$, and we conclude by applying the general machinery described in \S\ref{sssec:general-machinery}.

    \subsection{Outline of the paper}

    In \S \ref{sec:schr-trip}, we introduce the notion of a Schreieder triple, explain the result of Schreieder connecting them to the rationality problem for hypersurfaces, and discuss some important criteria for proving that a symbol is unramified. 
    In addition, we describe a simple but key construction in \S \ref{ssec:augmented-subform}.
    In \S \ref{sec:construction}, we explain the construction of the particular family of Schreieder triples which goes into Theorem~\ref{thm:main-intro}.
    In \S \ref{sec:conclusion}, we put everything together and prove Theorem~\ref{thm:main-intro}.

    \subsection{Notation and conventions}

    Throughout, $k$ is an uncountable algebraically closed field of characteristic not $2$.
    Given a field $K$ containing $k$ and elements $a_1, \dots, a_n \in K^\times$, we write $(a_1, \dots, a_n) \in \rH^n(K, \bZ/2)$ for the mod $2$ symbol.

    All quadratic forms are assumed to be  nondegenerate.
    If $\sigma$ is a quadratic form, our convention is that a \emph{subform} $\tau \subset \sigma$ consists of a subspace of the vector space underlying $\sigma$, such that the restriction of $\sigma$ to this subspace is nondegenerate.

    \subsection{Acknowledgements} 
    We are very grateful to Stefan Schreieder, both for his comments on a draft paper, his advice, and for his inspiring work in the subject. 
    It should be clear to the reader that this paper  exists first and foremost because of a remarkable set of techniques and insights developed by Schreieder in the course of his work on the rationality problem.
    In addition, we would like to thank Alex Perry for helpful advice. 

    \subsection{AI disclosure}
    \label{ssec:ai-disclosure}

    The project was developed from a sustained interaction between the author and ChatGPT-5.6 Sol and later ChatGPT-6 Astra over the course of one month, with the goal of improving  the bound on the stable irrationality of very general hypersurfaces from \cite{lange-schreieder}.
    During that time, the author used ChatGPT to search for and develop a number of constructions based on the methods from \cite{moe} and \cite{lange-schreieder}, first in the combinatorial framework of \cite{nicaise-ottem} and later in the framework of Schreieder triples detailed in \S \ref{sec:schr-trip}.
    
    We emphasize that the final construction,  described in \S \ref{ssec:intro-the-construction} above and \S \ref{sec:construction} below, is due to ChatGPT-6 Astra.
    In addition, ChatGPT-6 Astra is responsible for suggesting almost all of the proofs, although the details have been written up here by the author.

    During the drafting process, ChatGPT-6 Astra was used to assist with proofreading and to help propagate changes in notation.
    Otherwise, this paper was written entirely by the author, who is responsible for its correctness.

    \section{Schreieder triples}
    \label{sec:schr-trip}

    \subsection{The definition}
    \label{ssec:the-definition}

    Throughout the paper, we freely use the basics of unramified cohomology \cite{CTO1989},
    for which we use the survey \cite{schreieder-survey} as a reference.
    For example, we will repeatedly use the fact that if $K/k$ is a finitely generated field extension, $v$ is a discrete valuation on $K$, and $\alpha = (a_1, \dots, a_n) \in \rH^n(K, \bZ/2)$ is a symbol over $K$ such that $v(a_1) = \cdots = v(a_n) = 0$, then $\alpha$ is unramified at $v$.
    This fact follows easily from the fact that each $a_i$ lies in the subgroup $\cO_v^\times \subset K^\times_v$, so over $K_v$ one directly lifts $\alpha$ to an element of $\rH^n(\cO_v, \bZ/2)$.
    Therefore, the residue of $\alpha$ vanishes over $K_v$, hence one concludes that $\alpha$ is unramified at $v$ using the compatibility of residues with completion \cite[\S 3.1, (3.3)]{schreieder-survey}.

    \begin{definition} 
        A \emph{Schreieder triple} is a triple $(\alpha, Q, H)$, where $\alpha = (\alpha_1, \dots, \alpha_b) \in \rH^b(K, \bZ/2)$ is a top-degree symbol over $K = k(x_1, \dots, x_b)$, $Q \subset \bP_K^s$ is a smooth integral projective quadric, and $H \subset \bP_K^s$ is a hyperplane, satisfying the following conditions:
        \begin{enumerate}
            \item The pullback of $\alpha$ to the function field $K(Q)$ is nonzero and unramified;
            \item The intersection $Q_H  \coloneqq Q \cap H$ is smooth over $K$ and integral, and the restriction of $\alpha_{K(Q)}$ to $K(Q_H)$ vanishes.
        \end{enumerate}
    \end{definition}

    By choosing coordinates, we can identify an affine chart $\bA_K^s \subset \bP_K^s$ with the generic fiber of a projection $\bP_k^{b+s} \dra \bP_k^b$. 
    In particular, the closure of $Q \cap \bA_K^s$ in $\bP_k^{b+s}$ is a hypersurface $X$ over $k$ of some degree $d$. 
    One of the insights from \cite{schreieder} is that if one takes the chart $\bA_K^s$ to be the complement of the hyperplane $H$ occurring in the Schreieder triple, then the pullback of $\alpha$ to the function field of $X$ obstructs the existence of a decomposition of the diagonal on a very general deformation of $X$.
    More precisely, one has the following result, which has been extracted more or less directly from Schreieder's work in \cite{schreieder} and \cite{schreieder-torsion}.

    \begin{theorem}[Schreieder]
    \label{thm:hypersurfaces-from-schr-trip}
        Let $(\alpha, Q, H)$ be a Schreieder triple, and identify the affine chart $U_H \coloneqq \bP_K^s \setminus H$ with the generic fiber of a projection $\bP_k^{b+s} \dra \bP_k^b$ as above.
        Let $d$ be the degree of the closure of $Q \cap U_H$ in $\bP_k^{b+s}$. Then a very general hypersurface over $k$ of degree $d' \geq d$ and dimension $b + s - 1$ does not admit a decomposition of the diagonal, and in particular is stably irrational.
    \end{theorem}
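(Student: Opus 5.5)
We will apply Schreieder's degeneration method in the form of \cite[Proposition 26]{Schreieder2019QuadricBundles} and its refinement in \cite{schreieder-torsion}. The shape of that method is as follows. Suppose a proper flat family over a DVR has a very general fiber with a decomposition of the diagonal. Suppose further that the special fiber $X$ carries a nonzero unramified class $\beta \in \rH^n_{nr}(k(X)/k, \bZ/2)$. The criterion then demands a resolution $\tau\colon \tilde X \to X$ on which $\tau^*\beta$ restricts to zero on the generic point of every component of the exceptional locus, or of any chosen proper closed subset containing the bad locus. Under these conditions the decomposition cannot exist. So the plan has two ingredients. First, take $X$ to be the closure $\overline{Q \cap U_H} \subset \bP^{b+s}_k$, with $\beta = \alpha_{k(X)}$; note $k(X) = K(Q)$. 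Second, take a degeneration of a very general degree-$d'$ hypersurface to $X$.

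\textbf{Step 1: the case $d' = d$.} The very general hypersurface of degree $d$ specializes to $X$, and $\beta$ is nonzero and unramified over $k$ by condition (1) of a Schreieder triple. Let $\pi\colon X \dra \bP^b_k$ be the projection, with generic fiber $Q \cap U_H$. Choose any resolution $\tilde X \to X$ that also resolves $\pi$. We must show that $\beta$ vanishes on every subvariety $E \subset \tilde X$ mapping into the singular locus of $X$, or at least into a suitable closed set. We split according to the image of $E$.
\begin{itemize}
\item If $E$ dominates $\bP^b_k$, then $E$ lies over the generic fiber. Since $Q$ is smooth over $K$, $E$ can only lie over the hyperplane at infinity $H$. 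Here condition (2) enters: $\alpha_{K(Q)}$ vanishes on $K(Q_H)$. Because $Q_H$ is smooth, $\alpha$ restricted to the local ring of $Q$ along $Q_H$ specializes to $\alpha_{K(Q_H)} = 0$. This gives vanishing on $E$, after a Gabber-type or purity argument comparing restriction to $E$ with restriction to $Q_H$ through the smooth locus.
\item If $E$ does not dominate $\bP^b_k$, then its image is a proper subvariety $Z \subset \bP^b_k$. The class $\alpha$ is a symbol of degree $b = \dim \bP^b$ pulled back from $K = k(\bP^b)$. Its restriction to $k(E)$ factors through the residue field of a valuation centered on $Z$ of dimension $< b$, where the cohomological dimension bound kills degree-$b$ classes. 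Schreieder's lemma handles exactly this. We will use the version in \cite{schreieder} that phrases it as: $\alpha$ lies in the image of $\rH^b(\bP^b \setminus Z')$ up to classes supported on lower-dimensional loci, so its restriction to anything over $Z$ vanishes.
\end{itemize}

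\textbf{Step 2: $d' > d$.} Degenerate the very general degree-$d'$ hypersurface to $X \cup (d'-d)$ general hyperplanes. Alternatively, use the trick of \cite{schreieder}: take the hypersurface $\{ \ell^{d'-d} F = 0\}$ with $\ell$ a coordinate corresponding to $H$. Equivalently, multiply the defining equation by a power of the linear form cutting out $H$. The extra components and the singular locus then lie over $H$ or over proper subsets of $\bP^b$. Step 1 already handles both kinds of locus, so the same vanishing applies. This is why the chart must be the complement of $H$.

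\textbf{Main obstacle.} The hardest step is the vanishing over $H$, namely on components lying over the generic point of $\bP^b$ but inside the boundary at infinity. These are singular in the total space, and the reducible degeneration for $d' > d$ adds further such components. For this we will invoke the torsion-order formulation of \cite{schreieder-torsion}, which requires only vanishing on the generic points of a closed subset and allows arbitrary singularities. We will check that these generic points are governed by the restriction to $K(Q_H)$.
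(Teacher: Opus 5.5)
Your strategy is the paper's. You degenerate to $X=\overline{Q\cap U_H}$ (plus a multiple hyperplane when $d'>d$) and apply the specialization method of \cite[Proposition~6.1]{schreieder-torsion}. Bad subvarieties not dominating $\bP^b_k$ are handled by \cite[Theorem~9.2]{schreieder}. A dominating one maps, in the strict transform of $X$ in the blow-up of the center $\Lambda$ of the projection, to a point of the generic fiber $Q$ on the exceptional divisor, i.e.\ to a point of $Q\cap H$. There condition~(2), together with smoothness of $Q\cap H$, forces the vanishing. Two points need correcting, and one justification should be replaced.

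\textbf{The general-hyperplanes option in Step~2 fails.} Degenerating to $X\cup(d'-d)$ \emph{general} hyperplanes does not work, and your claim that the extra locus lies over $H$ or over a proper closed subset of $\bP^b_k$ is false for this choice. For a general hyperplane $L$, the vanishing must be checked on $X\cap L$. But $X\cap L$ dominates $\bP^b_k$, with generic fiber a hyperplane section of $Q$ different from $Q\cap H$. Nothing in the definition of a Schreieder triple controls $\alpha$ on its function field. The extra component must instead be the hyperplane $H'\subset\bP^{b+s}_k$ cut out by the linear form defining $H$. It contains $\Lambda$, and away from $\Lambda$ it maps onto a hyperplane of $\bP^b_k$; this is your second alternative and the paper's choice. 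With $H'$, a dominating subvariety over $X\cap H'$ must land in the exceptional divisor, hence in $Q\cap H$.

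\textbf{Resolution is not available in positive characteristic.} The theorem is asserted in all characteristics $\neq 2$, where resolution of singularities is not known. As in the paper, replace your resolution by a smooth alteration of odd degree of the strict transform \cite{IllusieTemkin2014}, which the method of \cite{schreieder-torsion} allows.

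\textbf{The non-dominant case needs the cited theorem, not your sketch.} As written, the sketch is not an argument. $\alpha$ is in general ramified along divisors of $\bP^b_k$ through the image of $E$, so it does not simply restrict to a residue field there. The vanishing genuinely uses that the pullback of $\alpha$ is unramified on the total space. This is exactly what \cite[Theorem~9.2]{schreieder} provides, so rely on that result rather than on the heuristic.
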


    \begin{proof}
        Let $\Lambda \subset \bP_k^{b + s}$ be the plane of codimension $b + 1$ from which the projection $\bP^{b + s}_k \dra \bP^b_k$ is defined. 
        We choose coordinates so that
        \begin{align*}
            [Y_0, Y_1, \dots, Y_s] &\textrm{ on } \bP^{s}_K \\
            [Y_0, Y_1, \dots, Y_s, X_1, \dots, X_b] & \textrm{ on } \bP^{b + s}_k, \\
            [Y_0, X_1, \dots, X_b] &\textrm{ on } \bP^{b}_k,
        \end{align*}
        so that the hyperplane $H$ is given by $\{Y_0 = 0\} \subset \bP^{s}_K$, and the projection is given by 
        \begin{equation*}
        \bP^{b + s}_k \dra \bP^b_k \colon [Y_0,Y_1, \dots, Y_s, X_1, \dots, X_b] \mapsto [Y_0, X_1, \dots, X_b].
        \end{equation*}
        Moreover, we define the hyperplanes $H' = \{Y_0 = 0\} \subset \bP^{b + s}_k$ and $H'' = \{Y_0 = 0\} \subset \bP^b_k$.

        Let $Z \subset \bP_k^{b+s}$ be the closure of $Q\cap U_H$, and let $Y$ be its strict transform in $\mathrm{Bl}_{\Lambda}\bP_k^{b+s}$. Write $f:Y\to\bP_k^b$ for the induced morphism.
        Choose a smooth alteration $\tau:\tilde Y\to Y$ of odd degree, which exists by Gabber's version of de Jong's alteration theorem \cite[Theorem~2.1]{IllusieTemkin2014}, and let $\rho:\tilde Y\to Z$ be the composition.
        We make two observations:
        \begin{enumerate}
            \item The generic fiber of $f$ is $Q$.
            \item \label{obs:int-with-exc}
            If $E\subset Y$ is the intersection of $Y$ with the exceptional divisor of $\mathrm{Bl}_{\Lambda} \bP_k^{b + s}$, then its generic fiber over $\bP_k^b$ is $Q\cap H$. This follows directly from local computation on the blowup, as in \cite[\S 7.1]{schreieder}.
        \end{enumerate}

        Let $\beta$ be the pullback of $\alpha$ to $K(Q)$, which is nonzero and unramified from the definition of a Schreieder triple, 
        and let $X$ be a very general hypersurface of degree $d' \geq d$. 
        We degenerate $X$ to the fiber $Z \cup (d' - d)H'$, where $H'$ is the hyperplane $\{Y_0 = 0\} \subset \bP^{b + s}_k$ defined above, and the notation $(d' - d)H'$ means that $H'$ appears in the special fiber with multiplicity $d' - d$.
        (If $d' = d$, then the special fiber is just $Z$.)
        By the version of the specialization method from \cite[Proposition 6.1]{schreieder-torsion}, it suffices to show that for any integral subvariety $T\subset\rho^{-1}(Z^{\mathrm{sing}}\cup(Z\cap H^{\prime}))$, we have the vanishing result $(\tau^* \beta)|_{T} = 0$.
        If $T$ does not dominate $\bP^b_k$, then we are done by \cite[Theorem 9.2]{schreieder}, so we assume that $T$ dominates $\bP^b_k$.
        
        Let $\nu \in T$ be the generic point. 
        Since $T$ dominates $\bP^b_k$, the image $\tau(\nu)$ lies in the generic fiber of $Y \to \bP^b_k$, which is simply the quadric $Q$.
        We claim that, in fact, $\tau(\nu)$ lies in $Q \cap H$, which coincides with $Q \cap E$ by Observation~\ref{obs:int-with-exc} above.
        
        Assume for the sake of contradiction that $\tau(\nu) \notin Q \cap E$.
        Outside of $E$, $\pi \colon Y \to Z$ is an isomorphism. 
        Since $Y$ is smooth at each point of its generic fiber $Q$, $\rho(\nu) = \pi(\tau(\nu))$ cannot lie in $Z^{\mathrm{sing}}$.
        Therefore, $\rho(\nu)$ lies in $H'$.
        But outside of the center of the blowup, the projection from $\bP^{b + s}_k$ to $\bP^b_k$ sends $H'$ onto the hyperplane $H'' \subset \bP^b_k$, contradicting the fact that $\nu$ dominates $\bP^b_k$.

        Therefore, $\tau(\nu)$ is contained in $Q \cap H$.
        From the definition of a Schreieder triple, the pullback of $\alpha$ to $K(Q \cap H)$ vanishes. 
        Since the pullback of $\alpha$ to $T$ factors through its pullback to $Q \cap H$ (and also the pullback to $Q$, where the class $\alpha$ becomes unramified), we conclude that $(\tau^* \beta)|_{T} = 0$ using the functoriality of unramified cohomology.
    \end{proof}

    \subsection{A criterion for unramifiedness}

    In this section, we recall an important criterion for unramifiedness (Proposition~\ref{prop:criterion-for-unramified} below), which has also been extracted from \cite{schreieder}.
    First, let us recall that if $\alpha = (\alpha_1, \dots, \alpha_b)$ is a symbol over a field $K$, then the Pfister form $\Pi$ associated to $\alpha$ is the quadratic form
    \begin{equation*}
        \Pi = \pfister{\alpha_1, \dots, \alpha_b} \coloneqq \langle 1, \alpha_1 \rangle \otimes \cdots \otimes \langle 1, \alpha_b \rangle.
    \end{equation*}
    We will frequently use the following general facts about the relationship between the symbol $\alpha$ and its Pfister form $\Pi$:

    \begin{theorem}
    \label{thm:pfister-symbol}
        Let $K$ be a field containing $k$, let $\alpha=(a_1,\dots,a_n)\in\rH^n(K,\bZ/2)$, and let $\Pi$ be its associated Pfister form.
        \begin{enumerate}
            \item $\Pi$ is isotropic if and only if $\alpha=0$.
            \item Suppose that $\Pi$ is anisotropic, and let $Q=\{q=0\}\subset\bP_K^s$ be a smooth irreducible projective quadric. Then the following are equivalent:
            \begin{enumerate}[label=(\roman*)]
                \item $\alpha_{K(Q)}=0$; 
                \item $\Pi_{K(Q)}$ is isotropic;
                \item $q$ is similar to a subform of $\Pi$.
            \end{enumerate}
        \end{enumerate}
    \end{theorem}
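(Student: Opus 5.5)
The plan is to deduce Theorem~\ref{thm:pfister-symbol} from standard results in the algebraic theory of quadratic forms, in three steps. We identify $\rH^n(K,\bZ/2)$ with $I^n(K)/I^{n+1}(K)$ via the Milnor conjecture (Orlov--Vishik--Voevodsky), or more elementarily use the known injectivity on Pfister forms. Under this identification the symbol $(a_1,\dots,a_n)$ corresponds to the class of $\Pi$.

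For part (1), recall that a Pfister form is isotropic if and only if it is hyperbolic. If $\Pi$ is hyperbolic, then $\alpha=0$, since the symbol of $\pfister{a_1,\dots,a_n}$ depends only on its isometry class and the hyperbolic Pfister form has a presentation with $a_1=-1\cdot(\text{square})$. More simply, isotropy of $\langle 1,a_1\rangle\otimes\cdots$ yields a representation $\Pi\cong\pfister{-1,\dots}$ after a chain-equivalence argument, and $(-1)=0$ because $k$ is algebraically closed. Conversely, if $\alpha=0$, the injectivity statement (the $e^n$ invariant detects hyperbolicity of $n$-fold Pfister forms, a consequence of the Milnor conjecture, or of Arason--Merkurjev for small $n$) gives that $\Pi$ is hyperbolic, hence isotropic.

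For part (2), we argue as follows.
\begin{itemize}
\item (i)$\Leftrightarrow$(ii) is part (1) applied over the field $K(Q)$, which contains $k$.
\item (ii)$\Leftrightarrow$(iii) is the Cassels--Pfister subform theorem in the form due to Arason and Knebusch: for an anisotropic Pfister form $\Pi$ and a form $q$ of dimension $\ge 2$ (smoothness and integrality of $Q$ ensure $q$ is nondegenerate and not a hyperbolic plane), $\Pi_{K(q)}$ is hyperbolic (equivalently isotropic) if and only if $q$ is similar to a subform of $\Pi$.
\end{itemize}
The direction (iii)$\Rightarrow$(ii) is elementary. If $c q\subset\Pi$, then $q$ has a nontrivial zero over $K(Q)$, namely the generic point, so $\Pi_{K(Q)}$ is isotropic.

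The main obstacle is the direction (ii)$\Rightarrow$(iii). There I would cite the subform theorem directly, e.g.\ from Elman--Karpenko--Merkurjev, Theorem 23.4 and Corollary 23.6, rather than reprove it. Its proof goes by induction on $\dim q$ via the Cassels--Pfister representation theorem over the rational function field.

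Since the statement is a compilation of classical facts, the proof will consist of these citations together with the remark that $K\supset k$ is used only to kill $(-1)$ and ensure characteristic $\neq 2$.
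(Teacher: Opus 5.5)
Your proposal is correct and matches the paper, which gives no argument of its own and simply cites Schreieder's Theorems 2.2 and 2.3. As there, (1) rests on the Milnor conjecture (Voevodsky) together with the fact that an isotropic Pfister form is hyperbolic, (i)$\Leftrightarrow$(ii) is (1) applied over $K(Q)$, and (ii)$\Leftrightarrow$(iii) is the Cassels--Pfister subform theorem; your remark that $-1$ being a square in $K$ makes the paper's convention $\pfister{a}=\langle 1,a\rangle$ harmless is a point the paper leaves implicit.
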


    \begin{proof}
        See \cite[Theorems~2.2 and 2.3]{schreieder}.
        As explained in loc. cit., (1) is a consequence of the Milnor conjecture, proved by Voevodsky in \cite{Voevodsky2003}.
    \end{proof}

    \begin{proposition}
    \label{prop:criterion-for-unramified}
        Let $K/k$ be a finitely generated field extension, let $\alpha \in \rH^n(K, \bZ/2)$ be a symbol with associated Pfister form $\Pi$, and let $Q = \{q = 0\}$ be a smooth integral projective quadric.  
        Suppose that for each discrete valuation $v$ on $K$ at which $\alpha$ ramifies, the form $q$ becomes similar to a subform of $\Pi$ over $K_v$.
        Then the pullback of $\alpha$ to $K(Q)$ is unramified over $k$.
    \end{proposition}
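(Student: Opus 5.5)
We will check directly that $\beta \coloneqq \alpha_{K(Q)}$ has vanishing residue at every discrete valuation $w$ of $K(Q)$ trivial on $k$. Fix such a $w$ and let $v = w|_K$ be its restriction to $K$. There are two cases.

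\textbf{Case 1: $v$ is trivial on $K$.} Then $K \subset \cO_w$. Since $\alpha$ is defined over $K$, the class $\beta$ is the image of a class over $\cO_w$, because its entries are units in $\cO_w$. Hence $\beta$ is unramified at $w$.

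\textbf{Case 2: $v$ is nontrivial.} Then $v$ is a discrete valuation on $K$, possibly after normalizing, since $w$ is discrete and $K(Q)/K$ is finitely generated. If $\alpha$ is unramified at $v$, we use the standard compatibility of residues with field extensions. The residue $\partial_w(\beta)$ is $e(w/v)$ times the image of $\partial_v(\alpha)$, plus a contribution from the unramified part. This is zero when $\partial_v\alpha=0$, since $\alpha$ then lifts to $\rH^n(\cO_v,\bZ/2)$ over the completion $K_v$, and this lift maps to $\rH^n(\cO_w,\bZ/2)$ over $K(Q)_w$.

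The remaining situation is that $\alpha$ ramifies at $v$. Here we use the hypothesis. Pass to completions: $K(Q)_w$ contains $K_v$, and the residue of $\beta$ at $w$ can be computed after replacing $\beta$ by its image in $\rH^n(K(Q)_w,\bZ/2)$, using \cite[(3.3)]{schreieder-survey}. It therefore suffices to show that $\alpha$ becomes zero over $K(Q)_w$. Consider the composite field $K_v(Q)$, the function field of $Q_{K_v}$. It embeds into $K(Q)_w$: the inclusion $K(Q)\to K(Q)_w$ and $K_v\to K(Q)_w$ generate it, and $Q_{K_v}$ stays integral since $Q$ is smooth. By hypothesis, $q$ is similar to a subform of $\Pi$ over $K_v$.

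We now split according to whether $\Pi_{K_v}$ is isotropic. If it is isotropic, then $\alpha_{K_v}=0$ by Theorem~\ref{thm:pfister-symbol}(1), and so $\beta$ vanishes in $K(Q)_w$. If it is anisotropic, Theorem~\ref{thm:pfister-symbol}(2), applied over $K_v$ with the implication (iii)$\Rightarrow$(i), gives $\alpha_{K_v(Q)}=0$. Either way, $\alpha$ vanishes in $K(Q)_w$, so the residue is zero.

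The main obstacle is the field-theoretic bookkeeping in Case 2, namely justifying that $K_v(Q)$ embeds in $K(Q)_w$. The first approach to try is to take the closure of $K(Q)$ in $K(Q)_w$: it contains the closure of $K$, which is $K_v$, together with the coordinates of the generic point of $Q$. This gives a $K_v$-point of $Q$ over $K(Q)_w$ that is the generic point image. If that image is not the generic point of $Q_{K_v}$, the point still lies on $Q_{K_v}$, and its residue field is a field extension of $K_v$ over which $q$, being similar to a subform of $\Pi$, makes $\Pi$ isotropic. That alone suffices: any field $L \supset K_v$ with $Q(L)\neq\emptyset$ has $q$ isotropic over $L$. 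Writing $\Pi\simeq cq\perp\tau$, the form $\Pi_L$ is then isotropic, so $\alpha_L=0$ by Theorem~\ref{thm:pfister-symbol}(1). This cleaner version avoids generic points entirely, so I would use it.
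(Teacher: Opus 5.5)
Your proposal is correct and follows the paper's proof: the final ``cleaner'' argument is exactly what the paper does. You observe that $K(Q)_w$ contains $K_v$, so $q$ is similar to a subform of $\Pi$ there, and that it contains $K(Q)$, so $q$ is isotropic there; hence $\Pi_{K(Q)_w}$ is isotropic, $\alpha$ vanishes over $K(Q)_w$, and compatibility of residues with completion finishes, with the preliminary cases ($w|_K$ trivial, or $\alpha$ unramified at $v$) also matching. You are right to discard the intermediate attempt to embed $K_v(Q)$ into $K(Q)_w$. It is unnecessary, and it can fail in general, since for non-divisorial $w$ the completion $K(Q)_w$ may even be algebraic over $K_v$.
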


     \begin{proof}     
        Let $w$ be a discrete valuation on $K(Q)$.
        If $w|_K$ is trivial, every entry of the symbol $\alpha$ lies in $\cO^\times_w \subset K(Q)$, so $\alpha$ is unramified at $w$.

        Otherwise, if $w|_K$ is nontrivial, let $v$ be the restriction of $w$ to $K$.
        The inclusion $K \subset K(Q)$ extends to an inclusion between completions, $K_v\subset K(Q)_w$.
        If $\alpha$ is unramified at $v$, then its pullback is unramified at $w$ by the pullback formula for the residue, cf. \cite[\S 3.1, (3.3)]{schreieder-survey}.
        
        Therefore, we may suppose that $\alpha$ is ramified at $v$. 
        We make two observations:
        First, by assumption, $q$ is similar to a subform of $\Pi$ over $K_v$, and hence also over $K(Q)_w$.
        Second, $q$ is tautologically isotropic over $K(Q)$, so $q$ is isotropic over $K(Q)_w$.
        Putting these two observations together, we conclude that $\Pi$ is isotropic over $K(Q)_w$.
        Thus, Theorem~\ref{thm:pfister-symbol}(1) gives that $\alpha$ vanishes over $K(Q)_w$.

        The compatibility of residues with completion
        \cite[\S 3.1, Equation (3.3)]{schreieder-survey}
        therefore gives $\partial_w(\alpha_{K(Q)})=0$.
        Since $w$ was arbitrary, $\alpha_{K(Q)}$ is unramified over $k$.
\end{proof}

\subsection{Augmented subforms}
\label{ssec:augmented-subform}

We now describe an elementary construction which will be our main tool.
Let $K/k$ be a finitely generated field extension, and let $\alpha \in \rH^n(K, \bZ/2)$ be a symbol with associated Pfister form $\Pi$.
Suppose that $\sigma \subset \Pi$ is a subform. 

We consider the augmented subform $\langle 1 \rangle \perp \sigma$.
While $\sigma$ embeds into $\Pi$, the same is not necessarily true for $\langle 1 \rangle \perp \sigma$.

However, one has an embedding in the following simple situation. Let $\sigma^\perp$ denote the restriction of $\Pi$ to the orthogonal complement of the subspace supporting $\sigma$. 
If $\sigma^\perp$ represents $1$, choose a vector $u$ in this complement with $\Pi(u)=1$.
It follows that there is an embedding 
\begin{equation*}
    \langle 1 \rangle \perp \sigma \to \Pi
\end{equation*}
given by the identity on $\sigma$, and by sending the standard basis vector of the first summand to $u$.
Combining this observation with Proposition~\ref{prop:criterion-for-unramified}, we immediately obtain the following result:

\begin{corollary}
\label{cor:rep-one}
    Let $K/k$ be a finitely generated field extension, and let $\alpha \in \rH^n(K, \bZ/2)$ be a symbol with associated Pfister form $\Pi$.
    Let $\sigma \subset \Pi$ be a subform of dimension $\geq 2$, and let $Q$ be the smooth projective quadric associated to $\langle 1 \rangle \perp \sigma$.
    Suppose that for each discrete valuation $v$ on $K$ where $\alpha$ ramifies, $\sigma^\perp$ represents $1$ over the completion $K_v$.
    Then the pullback of $\alpha$ to $K(Q)$ is unramified.
\end{corollary}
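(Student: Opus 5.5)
The plan is to apply Proposition~\ref{prop:criterion-for-unramified} directly, with $q = \langle 1 \rangle \perp \sigma$. That proposition reduces everything to a local statement: for each discrete valuation $v$ on $K$ at which $\alpha$ ramifies, the form $\langle 1 \rangle \perp \sigma$ should become similar, in fact isometric, to a subform of $\Pi$ over the completion $K_v$. Before invoking it, I will check that $Q$ is a smooth integral projective quadric. The form $\langle 1 \rangle \perp \sigma$ is nondegenerate, being an orthogonal sum of nondegenerate forms, and it has dimension $\geq 3$ because $\dim \sigma \geq 2$. A nondegenerate quadratic form of dimension $\geq 3$ defines a smooth, geometrically integral quadric. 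This is exactly where the hypothesis $\dim \sigma \geq 2$ enters.

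Fix a valuation $v$ at which $\alpha$ ramifies. The inclusion $\sigma \subset \Pi$ base changes to an inclusion $\sigma_{K_v} \subset \Pi_{K_v}$. Since orthogonal complements commute with field extension, the restriction of $\Pi_{K_v}$ to the orthogonal complement of the subspace underlying $\sigma_{K_v}$ is $(\sigma^\perp)_{K_v}$.

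By hypothesis this complement represents $1$ over $K_v$, so we may choose a vector $u$ in it with $\Pi(u) = 1$. Following the construction described before the statement, define a linear map $\langle 1 \rangle \perp \sigma \to \Pi$ over $K_v$. It is the identity on the subspace of $\sigma$ and sends the basis vector $e$ of $\langle 1 \rangle$ to $u$. We then verify that this map is an isometric embedding. The map preserves values on $\sigma$ and sends $e$ to a vector of value $1$. The image of $e$ is orthogonal to the image of $\sigma$ because $u \in \sigma^\perp$, so the bilinear form is preserved and the map is an isometry onto its image. Injectivity follows from nondegeneracy, since an isometry from a nondegenerate form is injective. Hence the image is a nondegenerate subspace, i.e.\ a subform in the paper's convention.

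Therefore $q$ is isometric, and in particular similar, to a subform of $\Pi$ over every such $K_v$, and Proposition~\ref{prop:criterion-for-unramified} gives that $\alpha_{K(Q)}$ is unramified over $k$. I do not expect a serious obstacle here. The only points needing care are that the orthogonal complement behaves well under completion and that $Q$ meets the smoothness and integrality hypotheses, both of which are routine.
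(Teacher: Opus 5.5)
Your proposal is correct and is essentially the paper's argument: the paper obtains the corollary by applying Proposition~\ref{prop:criterion-for-unramified} to $q = \langle 1 \rangle \perp \sigma$, using over each $K_v$ the embedding into $\Pi$ that is the identity on $\sigma$ and sends the new basis vector to a vector $u \in \sigma^\perp$ with $\Pi(u) = 1$. Your extra checks are correct details the paper leaves implicit: smoothness and integrality of $Q$ from $\dim \sigma \geq 2$, compatibility of $\sigma^\perp$ with base change to $K_v$, and injectivity of the isometry.
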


    \section{The construction}
    \label{sec:construction}

    \subsection{Setup}
    
    Fix an integer $r \geq 3$. 
    We work over the field $K_r = k(z_1, \dots, z_r)$.
    (The assumption that $r \geq 3$ will not be used until \S\ref{ssec:define-pj} below, when we make a definition which is nonsensical for $r \leq 2$.) 
    Over this field, we consider the symbol
    \begin{equation*}
        \alpha_r = (z_1, \dots, z_r) \in \rH^r(K_r, \bZ/2), 
    \end{equation*}
    as well as its associated Pfister form $\Pi_r = \pfister{z_1, \dots, z_r}$.
    We write $\{e_I\},$ $I \subset \{1, \dots, r\}$, for the standard basis of the vector space underlying the Pfister form $\Pi_r$.
    We begin with a trivial remark, which will be used repeatedly:

    \begin{remark}
    \label{rem:ramified-val-of-zj}
        Let $v$ be a discrete valuation on $K_r$ such that $\alpha_r$ is ramified at $v$.
        Then there exists some $j \in \{1, \dots, r\}$ such that $v(z_j) \neq 0$.
        Indeed, if not, then $\alpha_r$ would be unramified, since each entry $z_j$ of $\alpha_r$ would be a unit in the valuation ring of $v$, cf. \S \ref{ssec:the-definition}.
    \end{remark}

    We start with the form $\sigma_0 = \langle 1, z_1, \dots, z_r \rangle$, which we embed into $\Pi_r$ using the vectors $e_{\emptyset}, e_1, \dots, e_r$, where we write $e_j = e_{\{j\}}$ for simplicity. 
    In other words, we view $\sigma_0$ as a subform of $\Pi_r$, supported on the vector space
    \begin{equation*}
        W_0 = \langle e_{\emptyset}, e_1, \dots, e_r \rangle.
    \end{equation*}
    Our goal is to introduce a perturbation $\sigma_t \subset \Pi_r$ of the subform $\sigma_0$ that satisfies the requirements of Corollary~\ref{cor:rep-one}.
    
    In particular, we will eventually need to write down some explicit elements of $\sigma_t^\perp$.
    To that end, it will turn out to be convenient to consider an alternative basis $v_0, \dots, v_r$ of $W_0$ satisfying the property that $\Pi_r(v_j^\vee)$ is a square in $K_r$ for each $j$.
    We define the vectors $v_0, \dots, v_r$ by
        \begin{align*}
        v_j &= \begin{cases}
            z_1 e_\emptyset  + \frac{z_1 - 1}{2} e_1 + \frac{z_1}{2} \sum_{i = 2}^r e_i & j = 0 \\
            \frac{z_{j + 1}}{2} e_j - \frac{z_j}{2} e_{j + 1} & 1 \leq j < r \\
            \frac{1}{2} e_r & j = r.
        \end{cases} 
    \end{align*}
    It is easy to see that $v_0, \dots, v_r$ form a basis for $W_0$, and we now explain the fact that $\Pi_r(v_j^\vee)$ is a square:

    \begin{lemma}
    \label{lem:square-norm}
    Let $v_0^\vee, \dots, v_r^\vee \in W_0$ be the dual basis of the basis $v_0, \dots, v_r$ introduced above.
    Then $\Pi_r(v_j^\vee) = c_j^2$ for each $0 \leq j \leq r$, where
    \begin{equation*}
            c_j = \begin{cases}
               z_1^{-1} & j = 0 \\
                z_j^{-1}z_{j + 1}^{-1}(1 + z_1+ \cdots + z_j) & 1 \leq j \leq r - 1 \\
                z_r^{-1}(1 + z_1+ \cdots + z_r) & j = r.
            \end{cases}
        \end{equation*}
    \end{lemma}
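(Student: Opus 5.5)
The plan is to interpret $v_0^\vee, \dots, v_r^\vee$ as the dual basis of $W_0$ with respect to the polar bilinear form $b$ of $\Pi_r|_{W_0}$. We normalize $b$ so that $b(x,x) = \Pi_r(x)$, and require $b(v_i^\vee, v_j) = \delta_{ij}$. We will write each $v_j^\vee$ down explicitly in the basis $e_\emptyset, e_1, \dots, e_r$, verify the defining pairings, and then evaluate $\Pi_r(v_j^\vee) = b(v_j^\vee, v_j^\vee)$ directly.

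In the basis $e_\emptyset, e_1, \dots, e_r$ the form is diagonal, $\Pi_r|_{W_0} = \langle 1, z_1, \dots, z_r\rangle$. For a general vector $w = a e_\emptyset + \sum_i b_i e_i$ the pairings are:
\begin{align*}
b(w, v_0) &= z_1\Bigl(a + \tfrac{z_1 - 1}{2} b_1 + \tfrac{1}{2}\textstyle\sum_{i \geq 2} z_i b_i\Bigr), \\
b(w, v_j) &= \tfrac{z_j z_{j+1}}{2}(b_j - b_{j+1}) \quad (1 \leq j < r), \\
b(w, v_r) &= \tfrac{z_r}{2} b_r.
\end{align*}
Solving these triangular conditions gives candidate dual vectors. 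Write $S_j = z_1 + \cdots + z_j$.
\begin{itemize}
\item For $j=0$, take $v_0^\vee = z_1^{-1} e_\emptyset$.
\item For $1 \leq j < r$, set $\beta_j = 2/(z_j z_{j+1})$ and take $b_1 = \cdots = b_j = \beta_j$, $b_i = 0$ for $i > j$, and $a = -\beta_j (S_j - 1)/2$.
\item For $j = r$, take $b_1 = \cdots = b_r = 2/z_r$ and $a = -(S_r - 1)/z_r$.
\end{itemize}
In each case one checks directly from the displayed formulas that the pairings with $v_0, \dots, v_r$ are $\delta_{ij}$. The constant blocks of $b_i$ kill the pairings with $v_i$ for $i \ne j$ in the middle range, and $a$ is chosen to kill the pairing with $v_0$.

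The norms then follow from one identity:
\begin{equation*}
\tfrac{1}{4}(S-1)^2 + S = \tfrac{1}{4}(S+1)^2 .
\end{equation*}
For $1 \leq j < r$ we get
\begin{equation*}
\Pi_r(v_j^\vee) = a^2 + \beta_j^2 S_j = \beta_j^2\Bigl(\tfrac{(S_j-1)^2}{4} + S_j\Bigr) = \frac{(1+S_j)^2}{z_j^2 z_{j+1}^2}.
\end{equation*}
For $j = r$ the same identity gives
\begin{equation*}
\Pi_r(v_r^\vee) = \frac{(S_r - 1)^2 + 4 S_r}{z_r^2} = \frac{(1 + S_r)^2}{z_r^2}.
\end{equation*}
For $j = 0$, trivially $\Pi_r(v_0^\vee) = z_1^{-2}$. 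These are exactly $c_j^2$ as claimed.

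The main obstacle is bookkeeping rather than conceptual: fixing the normalization of the bilinear form (polar form versus $b(x,x) = 2\Pi_r(x)$) so that the factors of $\tfrac12$ in the $v_j$ produce exactly the stated $c_j$. A second concern is confirming that "dual basis" means dual with respect to $\Pi_r$ inside $W_0$ rather than in $W_0^*$. If the normalization were off, the answer would change by a factor of $4$, which is still a square, so the substantive claim that each $\Pi_r(v_j^\vee)$ is a square is robust either way.
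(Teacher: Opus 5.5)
Your proof is correct and takes the paper's approach: the paper also writes each $v_j^\vee$ explicitly in the basis $e_\emptyset, e_1, \dots, e_r$, and your vectors (e.g.\ $v_j^\vee = \frac{1}{z_jz_{j+1}}\bigl((1-S_j)e_\emptyset + 2\sum_{i\le j} e_i\bigr)$ for $1 \leq j < r$) coincide with its formulas, using the same normalization $\langle e_I, e_I\rangle = \prod_{i\in I} z_i$ that you chose. You have supplied the pairing checks and the identity $\tfrac14(S-1)^2 + S = \tfrac14(S+1)^2$ that the paper leaves as ``a direct computation.''
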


    \begin{proof}
       Writing $A_j = \sum_{i = 1}^j z_i$, a direct computation shows that
        \begin{align*}
            v_0^\vee &= \frac{1}{z_1} e_{\emptyset}, \\
            v_j^\vee &= \frac{1}{z_jz_{j + 1}}\left((1 - A_j)e_{\emptyset} + 2 \sum_{i = 1}^j e_i\right), \quad (1 \leq j < r) \\
            v_r^\vee &= \frac{1}{z_r} \left(  (1 - A_r) e_\emptyset + 2 \sum_{i = 1}^r e_i \right).
        \end{align*}
        The lemma follows easily.
    \end{proof}

    \subsection{Perturbing the form}
    \label{ssec:define-pj}

    In the vector space underlying $\Pi_r$, we define the vectors $p_j$ as follows:
        \begin{align*}
        p_j &= \begin{cases}
            e_{\{1,2,3\}} & j = 0 \\
            e_{\{j, j + 1\}} & 1 \leq j < r \\
            e_{\{1, r\}} & j = r.
        \end{cases}
    \end{align*}
    Note that each $p_j$ is orthogonal to all of the vectors $v_0, \dots, v_r$.
    In addition, we have used our assumption that $r \geq 3$, since otherwise it does not make sense to consider $e_{\{1,2,3\}}$ as a vector in the space underlying $\Pi_r$.

    \begin{definition}
        For a parameter $t \in k$, we define  $\sigma_t \subset \Pi_r$ as the subform supported on
        \begin{equation*}
            W_t \coloneqq \mathrm{Span}\{v_0 + t p_0, \dots, v_r + tp_r\},
        \end{equation*}
        where $v_j$, $p_j$ have been defined above.
        We observe that projection onto $W_0$ along $W_0^\perp$ sends $v_j+tp_j$ to $v_j$, so $\{v_j + tp_j\}$ indeed forms a basis of $W_t$, and that for $t = 0$, we recover the form $\sigma_0$ defined previously.
        Since the Pfister form $\Pi_r$ is anisotropic, the restriction of $\Pi_r$ to the subspace $W_t$ is an anisotropic quadratic form, and we define
        \begin{equation*}
            \sigma_t = \Pi_r|_{W_t}.
        \end{equation*}
        We define $Q_t \subset \bP^{r + 1}_{K_r}$ to be the smooth projective quadric associated to the form $\langle 1 \rangle \perp \sigma_t$,
        i.e., as the quadric arising from the augmented subform construction from \S \ref{ssec:augmented-subform} above.
    \end{definition}

    The main result of this section is the following:

    \begin{theorem}
    \label{thm:sigma-t-is-schr}
        Suppose that $t \in k$ is very general and $r \geq 3$, and let $H \subset \bP^{r + 1}_{K_r}$ be the hyperplane corresponding to the orthogonal complement of the first summand $\langle 1 \rangle$ of $\langle 1 \rangle \perp \sigma_t$.
        Then $(\alpha_r, Q_t, H)$ is a Schreieder triple. 
    \end{theorem}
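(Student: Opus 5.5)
Three things have to be verified for $(\alpha_r,Q_t,H)$: that $\alpha_r$ pulls back to an unramified class on $K_r(Q_t)$, that this pullback is nonzero, and that $Q_t\cap H$ is smooth and integral with $\alpha_r$ vanishing on its function field.

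\textbf{The hyperplane condition.} This part is the easiest. By definition $Q_t\cap H$ is the quadric of $\sigma_t$, which has dimension $r+1\ge 4$. Since $\sigma_t\subset\Pi_r$ is anisotropic, it is nondegenerate, so $Q_t\cap H$ is smooth. It is also integral, because a smooth quadric of dimension $\ge 1$ is integral (here $\dim\sigma_t\ge 3$). Finally, $\sigma_t$ is a subform of $\Pi_r$, so Theorem~\ref{thm:pfister-symbol}(2) gives $\alpha_{r}|_{K_r(Q_t\cap H)}=0$. The same argument shows that $Q_t$ itself is smooth and integral.

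\textbf{Unramifiedness.} I will apply Corollary~\ref{cor:rep-one} with $\sigma=\sigma_t$. This requires showing that for every discrete valuation $v$ on $K_r$ at which $\alpha_r$ ramifies, the form $\sigma_t^\perp$ represents $1$ over $K_v$. By Remark~\ref{rem:ramified-val-of-zj}, some $v(z_j)\ne 0$. The plan is to write down explicit vectors of $\sigma_t^\perp$. Since $W_t$ is spanned by $v_j+tp_j$, with $p_j\perp W_0$, a natural candidate is $u_j=v_j^\vee - \lambda p_j$. Its pairings with $v_i+tp_i$ are $\delta_{ij}-t\lambda\,b(p_j,p_i)$, and the $p_i$ are mutually orthogonal basis vectors with $\Pi_r(p_j)$ a product of the $z$'s. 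The corrected vector lies in $\sigma_t^\perp$ after adjusting by $W_0$-components, for instance $u_j=v_j^\vee\cdot(\text{scalar}) - t^{-1}\Pi_r(p_j)^{-1}p_j\cdot(\ldots)$. Its value is then
\[
\Pi_r(u_j)=c_j^2\cdot\epsilon+\text{(term in }\Pi_r(p_j)^{-1}t^{-2}).
\]
The square-norm Lemma~\ref{lem:square-norm} is designed exactly so that, for a suitable $j$ depending on which $z_i$ have nonzero valuation, one term dominates $v$-adically and is a square times a $1$-unit. Hensel's lemma then shows that $\Pi_r(u_j)$ is a square in $K_v$, so $\sigma_t^\perp$ represents $1$. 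The case analysis I will carry out runs over the smallest $j$ with $v(z_j)\neq 0$ and over the sign of $v(z_j)$. The valuations $v$ centered on the loci $1+z_1+\dots+z_j=0$ (where $c_j$ degenerates) need separate treatment. There I would use a different $u_i$, or combinations with the orthogonal vectors $e_I$, $|I|\ge2$, not involving the $p_j$; these lie in $W_t^\perp$ and have norms that are monomials in the $z$'s, which supplies further square classes.

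\textbf{Nonvanishing.} At $t=0$ the quadric $Q_0$ comes from $\langle1,1,z_1,\dots,z_r\rangle$. This form is isotropic, so $K_r(Q_0)/K_r$ is purely transcendental and $\alpha_r$ stays nonzero there. For $t\ne0$ the pullback is unramified by the previous step. Gabber's specialization result from \cite[Appendix]{CT-examples} (unramified classes on the very general fiber are nonzero if their specialization is nonzero) then gives nonvanishing for very general $t$. To apply it, I will spread out over $\bA^1_t$ and check that the specialization of the class at $t=0$ is the nonzero class on $K_r(Q_0)$; this is fine since $Q_0$ is smooth and integral. Alternatively, one can argue directly: a nonzero class specializes to zero only on a countable set of $t$.

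\textbf{Main obstacle.} The hard step is the valuation-by-valuation verification that $\sigma_t^\perp$ represents $1$. This covers all ramified $v$, including non-monomial ones. I would first reduce to the data $(v(z_1),\dots,v(z_r))$ together with the residues of the $z_i$ and of the $1+A_j$. Then I would aim to show that in each case some explicit vector has norm equal to a square times a $v$-adic unit congruent to $1$.
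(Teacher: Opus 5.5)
\textbf{There is a genuine gap: the key valuation statement behind unramifiedness is not proved.}

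Your treatment of the hyperplane condition agrees with the paper. So does your treatment of nonvanishing: isotropy of $\langle 1,1,z_1,\dots,z_r\rangle$ at $t=0$, followed by Gabber's specialization. For unramifiedness you also choose the right framework: Corollary~\ref{cor:rep-one} with $\sigma=\sigma_t$, tested on the vectors $v_j^\vee-(t\,\Pi_r(p_j))^{-1}p_j$. These vectors already lie in $W_t^\perp$ with no further adjustment. After dividing by $c_j$, their norm is exactly $1+\bigl(t^2\,\Pi_r(v_j^\vee)\,\Pi_r(p_j)\bigr)^{-1}$, so Hensel's lemma applies precisely when $v\bigl(\Pi_r(v_j^\vee)\Pi_r(p_j)\bigr)<0$.

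The gap is that you never prove that every valuation $v$ at which $\alpha_r$ ramifies admits such a $j$. That statement is the real content of the theorem, and your sketch misplaces where the difficulty lies. The idea you are missing is this. Ramification of $\alpha_r$ at $v$ forces $\alpha_r\neq 0$ over $K_{r,v}$. Hence $\Pi_r$, and with it each subform $\langle 1,z_1,\dots,z_j\rangle\subset\sigma_0$, is anisotropic over $K_{r,v}$. By a Hensel argument (Lemma~\ref{lem:min-valuation}), anisotropy gives $v(1+z_1+\cdots+z_j)=\min\{0,v(z_1),\dots,v(z_j)\}$. So the valuations where $1+z_1+\cdots+z_j$ cancels, which you set aside for separate treatment, are never ramified and need no treatment. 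Your proposed fallback is not an argument anyway: nothing makes the monomial norm $\prod_{i\in I}z_i$ of a vector $e_I$ a square in $K_{r,v}$.

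Once cancellation is excluded, the conditions $v\bigl(\Pi_r(v_j^\vee)\Pi_r(p_j)\bigr)\geq 0$ for $0\le j\le r$ become linear inequalities in $v(z_1),\dots,v(z_r)$. The paper shows these force every $v(z_i)=0$, contradicting ramification by Remark~\ref{rem:ramified-val-of-zj}:
\begin{itemize}
\item The inequalities for $1\le j\le r-1$ give $v(z_r)\le\cdots\le v(z_2)\le-|v(z_1)|$.
\item The inequality for $j=r$ then forces $v(z_2)=\cdots=v(z_r)=-v(z_1)$ with $v(z_1)\ge 0$.
\item Only the inequality for $j=0$, coming from $p_0=e_{\{1,2,3\}}$, rules this last case out.
\end{itemize}

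Your split by the smallest index with $v(z_j)\neq 0$ and its sign cannot replace this global argument. Take $v(z_1)=a>0$ and $v(z_2)=\cdots=v(z_r)=-a$; this is realized by a ramified monomial valuation. There every $j=1,\dots,r$ gives valuation exactly $0$, and only $j=0$ succeeds. Now take $v(z_1)=1$, $v(z_2)=v(z_3)=5$, with the same smallest index and sign. There $j=0$ fails and $j=1$ works. So the correct $j$ depends on the whole vector $(v(z_1),\dots,v(z_r))$. Without this step, unramifiedness of $\alpha_r$ on $K_r(Q_t)$ is not established.
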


    \begin{remark}[Even vs. odd dimension]
    \label{rem:even-vs-odd}
        The Schreieder triples in Theorem~\ref{thm:sigma-t-is-schr} result in hypersurfaces of dimension $2r$ over $k$, since in the end one gets a hypersurface in $\bP_k^{2r + 1}$.
        To cover the case of odd-dimensional hypersurfaces, we will eventually consider a slight modification of the construction, where we replace $\sigma_t$ by a larger form $\sigma_t^+$ in which $\sigma_t$ has codimension $1$.
        Since the proofs in the two cases $\sigma_t$, $\sigma_t^+$ are identical, we have opted to stick to the $\sigma_t$ case to avoid clutter, and leave the discussion of $\sigma_t^+$ for \S \ref{ssec:odd-dimension} below.
    \end{remark}

    In order to apply Theorem~\ref{thm:sigma-t-is-schr} to the stable irrationality of hypersurfaces of degree $\geq 5$, we also need some control on the degree of the hypersurface which results from $(\alpha_r, Q_t, H)$ in Theorem~\ref{thm:hypersurfaces-from-schr-trip}.
    It turns out that all we need to know is the following, which we record here for future reference:

    \begin{lemma}
    \label{lem:gram-matrix}
        In the basis $v_0 + t p_0, \dots, v_r + tp_r$ of $W_t$, the Gram matrix of $\sigma_t$ has entries which are polynomials of degree $\leq 3$ in the variables $z_1, \dots, z_r$.
    \end{lemma}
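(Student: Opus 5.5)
The approach is direct computation of the bilinear form $B$ associated to $\Pi_r$ on the vectors $v_j + tp_j$. Since each $p_j$ is orthogonal to every $v_i$ (as noted after the definition of the $p_j$), the Gram matrix splits as
\begin{equation*}
    B(v_i + tp_i, v_j + tp_j) = B(v_i, v_j) + t^2 B(p_i, p_j).
\end{equation*}
We may therefore treat the two pieces separately. Throughout we use that the standard basis $\{e_I\}$ is orthogonal for $\Pi_r$, with $\Pi_r(e_I) = z_I \coloneqq \prod_{i \in I} z_i$.

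\textbf{The $p$-part.} Each $p_j$ is a single basis vector $e_{I_j}$, with $|I_j| \in \{2,3\}$. Hence $B(p_i,p_j)$ is zero unless $I_i = I_j$, in which case it equals $z_{I_j}$, a monomial of degree $\leq 3$. The sets $I_j$ can coincide for distinct $j$ only in small cases; for instance, when $r = 3$ we have $\{1,3\}$ occurring as $I_r$ and also possibly as some other $I_j$. Even then the entry is the same monomial, so all entries of this part are polynomials of degree $\leq 3$.

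\textbf{The $v$-part.} Here we expand each $v_j$ in the orthogonal basis $e_\emptyset, e_1, \dots, e_r$, so that $B(v_i,v_j) = \sum_I a_{iI}a_{jI} z_I$ with $|I| \leq 1$. The coefficients are as follows:
\begin{itemize}
    \item $v_0$ has coefficients $z_1$, $(z_1-1)/2$, and $z_1/2$, all of degree $\leq 1$;
    \item $v_j$ for $1 \leq j < r$ has coefficients $z_{j+1}/2$ and $-z_j/2$;
    \item $v_r$ has the constant coefficient $1/2$.
\end{itemize}
Thus each product $a_{iI}a_{jI}z_I$ has degree at most $1+1+1 = 3$. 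The only place degree could exceed this bound would be a product of two degree-one coefficients with a weight $z_I$ of degree one, and that product is exactly degree $3$. The coefficient of $e_\emptyset$ in $v_0$ multiplies the weight $1$, giving $z_1^2$. So every entry is a polynomial of degree $\leq 3$, and division by $2$ is harmless since $\mathrm{char}\, k \neq 2$.

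Combining the two parts proves the lemma. There is no real obstacle; the only care needed is the bookkeeping of the orthogonality of the $p_j$ to the $v_i$, which kills the cross terms linear in $t$.
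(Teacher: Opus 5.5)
Your proof is correct and is essentially the paper's argument. Orthogonality of the $p_j$ to $W_0$ kills the cross terms, the $t^2$-part is bounded using $|I_j| \leq 3$, and the $t$-independent part is bounded using the degree-$\leq 1$ coefficients of the $v_j$ in the orthogonal basis $e_\emptyset, e_1, \dots, e_r$.

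One harmless slip, which your argument does not depend on: for $r \geq 3$ the sets $\{1,2,3\}$, $\{j,j+1\}$ ($1 \leq j < r$) and $\{1,r\}$ are pairwise distinct. So the $p_j$ are mutually orthogonal and the $t^2$-part of the Gram matrix is diagonal. The coincidence with $e_{\{1,3\}}$ at $r = 3$ that you recall concerns the extra summand of $\sigma_t^+$, not the $p_j$ themselves.
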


    \begin{proof}
        Let $M_t$ be the Gram matrix of $\sigma_t$.
        Since $p_0, \dots, p_r$ is an orthogonal set of vectors, and moreover is orthogonal to $W_0$, we see that 
        \begin{equation*}
            M_t = M_0 + t^2 M',
        \end{equation*}
        where $M'$ is the Gram matrix of the restriction of $\Pi_r$ to the subspace spanned by $p_0, \dots, p_r$.
    
        Recall the basis $e_I$ for the vector space underlying $\Pi_r$.
        Writing $\langle -,- \rangle$ for the bilinear form associated to $\Pi_r$, we have by definition
        \begin{equation}
        \label{eq:bil-pairing-pfister}
            \langle e_I, e_J \rangle = \begin{cases}
                \prod_{i \in I} z_i  & I = J \\
                0 & I \neq J.
            \end{cases}
        \end{equation}
        From this, it follows immediately that $M'$ is a matrix of polynomials of degree $\leq 3$ in $z_1, \dots, z_r$.
        Indeed, $p_j$ is equal (by definition) to $e_I$ for some $I$ with $|I| \leq 3$.
        Hence, to conclude the lemma, we just need to show that $M_0$ is a matrix of polynomials of degree $\leq 3$.

        For $M_0$, observe from the definition that each $v_j$ is a linear combination of $e_\emptyset$, $e_1, \dots, e_r$ with  coefficients which are polynomials of degree at most $1$ in $z_1, \dots, z_r$.
        Since the basis $e_{\emptyset}, e_1, \dots, e_r$ is orthogonal, we see from \eqref{eq:bil-pairing-pfister} that $\langle v_i, v_j \rangle$ is a polynomial of degree $\leq 3$ in $z_1, \dots, z_r$.
    \end{proof}

    \subsection{Some vectors in the complement}

    Throughout this subsection, assume that $t \neq 0$. 
    Under this assumption, we describe some vectors $s_0, \dots, s_r$ in the orthogonal complement $W_t^\perp$ with respect to $\Pi_r$.
    The vectors $s_0, \dots, s_r$ will be used to test whether $\sigma_t^\perp$ represents $1$ over a completion $K_{r, v}$ of $K_r$.

    \begin{definition}
        We define the vectors $s_0, \dots, s_r$ by 
        \begin{equation*}
            s_j = \frac{1}{c_j} \left( v_j^\vee - \frac{1}{t \cdot \Pi_r(p_j)} p_j  \right).
        \end{equation*}
    \end{definition}

    The only properties which we will need from $s_0, \dots, s_r$ are the following ones:

    \begin{lemma}
    \label{lem:norm-of-sj}
        The vectors $s_j$ belong to $W_t^\perp$, and moreover 
        \begin{equation*}
            \Pi_r(s_j) = 1 + \frac{1}{t^2 \Pi_r(v_j^\vee)\Pi_r(p_j)}
        \end{equation*}
        for each $0 \leq j \leq r$.
    \end{lemma}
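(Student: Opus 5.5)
The plan is to check orthogonality against each basis vector $v_i + tp_i$ of $W_t$ directly, and then expand $\Pi_r(s_j)$ using orthogonality of $W_0$ and the span of the $p$'s. Throughout, write $\langle -,-\rangle$ for the bilinear form of $\Pi_r$, normalized so that $\langle x,x\rangle=\Pi_r(x)$ as in \eqref{eq:bil-pairing-pfister}. The only inputs are three structural facts:
\begin{enumerate}
\item $v_j^\vee\in W_0$ is the dual basis vector with respect to this form, so $\langle v_j^\vee, v_i\rangle=\delta_{ij}$;
\item each $p_i$ is orthogonal to $W_0$, as noted after the definition of the $p_j$;
\item the $p_i$ are pairwise orthogonal. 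This holds because they are $e_I$ for the distinct index sets $\{1,2,3\}$, $\{j,j+1\}$ $(1\le j<r)$ and $\{1,r\}$. For $r=3$, note that $\{1,3\}$ differs from $\{1,2\}$ and $\{2,3\}$, and $\{1,2,3\}$ has size $3$; in general these sets are distinct, so \eqref{eq:bil-pairing-pfister} gives orthogonality.
\end{enumerate}

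\textbf{Orthogonality.} Fix $i,j$ and compute
\begin{equation*}
\Bigl\langle v_j^\vee - \tfrac{1}{t\,\Pi_r(p_j)}p_j,\; v_i+tp_i\Bigr\rangle = \langle v_j^\vee,v_i\rangle + t\langle v_j^\vee,p_i\rangle - \tfrac{1}{t\Pi_r(p_j)}\langle p_j,v_i\rangle - \tfrac{1}{\Pi_r(p_j)}\langle p_j,p_i\rangle .
\end{equation*}
The second term vanishes because $v_j^\vee\in W_0\perp p_i$, and the third because $v_i\in W_0$. The first term is $\delta_{ij}$. By pairwise orthogonality, the last term is $\delta_{ij}\,\Pi_r(p_j)/\Pi_r(p_j)=\delta_{ij}$. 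The total is therefore $0$, and dividing by $c_j\neq 0$ shows $s_j\in W_t^\perp$. Here $c_j\neq0$ in $K_r$ is clear from Lemma~\ref{lem:square-norm}, and $t\neq 0$ is needed for $s_j$ to be defined.

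\textbf{Norm.} Since $v_j^\vee\perp p_j$,
\begin{equation*}
\Pi_r(s_j)=\frac{1}{c_j^2}\Bigl(\Pi_r(v_j^\vee)+\frac{1}{t^2\Pi_r(p_j)}\Bigr).
\end{equation*}
Substituting $c_j^2=\Pi_r(v_j^\vee)$ from Lemma~\ref{lem:square-norm} gives exactly $1+1/(t^2\Pi_r(v_j^\vee)\Pi_r(p_j))$.

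There is no real obstacle here. The only point needing care is the normalization convention: the diagonal pairing $\langle e_I,e_I\rangle=\prod_{i\in I} z_i$ must agree with $\Pi_r(e_I)$, which is how the paper sets it up, so that no factors of $2$ appear. One should also confirm the pairwise distinctness of the index sets defining the $p_j$ for all $r\ge3$, as done in item (3) above.
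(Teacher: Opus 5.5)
Your proposal is correct and follows the paper's own argument: you pair $s_j$ against each basis vector $v_\ell + tp_\ell$ of $W_t$, using $\langle v_j^\vee, v_\ell\rangle = \delta_{j\ell}$ and the fact that the $p_\ell$ are orthogonal to each other and to $W_0$, and then you expand $\Pi_r(s_j)$ and substitute $c_j^2 = \Pi_r(v_j^\vee)$ from Lemma~\ref{lem:square-norm}. You have simply written out the details that the paper leaves as a direct check, including that the index sets defining the $p_j$ are distinct for $r \geq 3$.
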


    \begin{proof}
        For each $j$, one easily checks by pairing $s_j$ with every $v_\ell+tp_\ell$ that $s_j\in W_t^\perp$.
        The formula for $\Pi_r(s_j)$ is a direct computation using the description of the vectors $v_j^\vee$ from the proof of Lemma~\ref{lem:square-norm}.
    \end{proof}

    \begin{corollary}
    \label{cor:sigma-rep-one}
        Suppose that $t \in k^\times$. 
        Let $v$ be a discrete valuation on $K_r$, and suppose that 
        \begin{equation*}
            v(\Pi_r(v_j^\vee)\Pi_r(p_j)) < 0
        \end{equation*}
        for some $0 \leq j \leq r$.
        Then $\sigma_t^\perp$ represents $1$ over $K_{r,v}$.
    \end{corollary}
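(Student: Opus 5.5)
The plan is to show that the vector $s_j$ from Lemma~\ref{lem:norm-of-sj} is, up to scaling by a unit, a vector of $\sigma_t^\perp$ of norm $1$ over $K_{r,v}$. By Lemma~\ref{lem:norm-of-sj}, $s_j \in W_t^\perp$, and its norm is
\begin{equation*}
    \Pi_r(s_j) = 1 + \frac{1}{t^2 \Pi_r(v_j^\vee)\Pi_r(p_j)}.
\end{equation*}
Write $x = \Pi_r(v_j^\vee)\Pi_r(p_j)$. Since $t \in k^\times$ and $k$ is algebraically closed, $t$ is a unit in the valuation ring of $v$ (as $v$ is trivial on $k^\times$). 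The hypothesis $v(x) < 0$ then gives
\begin{equation*}
    v\bigl(1/(t^2 x)\bigr) = -v(x) > 0,
\end{equation*}
so $\Pi_r(s_j) \equiv 1$ modulo the maximal ideal of the completion $\cO_{K_{r,v}}$.

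Next I would apply Hensel's lemma in the complete discretely valued field $K_{r,v}$. Its residue characteristic is that of $k$, which is not $2$, since $k$ sits in the residue field. Hence every element of the form $1 + m$ with $m$ in the maximal ideal is a square, say $\Pi_r(s_j) = u^2$ with $u \in \cO_{K_{r,v}}^\times$. Then $s_j/u$ lies in $W_t^\perp \otimes K_{r,v}$ and has $\Pi_r(s_j/u) = 1$. Thus $\sigma_t^\perp$ represents $1$ over $K_{r,v}$.

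Two points need care. First, $\sigma_t^\perp$ is the restriction of $\Pi_r$ to $W_t^\perp$, and orthogonal complements commute with base change to $K_{r,v}$, so $s_j/u$ really is a vector of $\sigma_t^\perp$ over the completion. Second, $s_j$ must be well defined, i.e.\ $c_j \neq 0$ and $\Pi_r(p_j) \neq 0$. Both hold in $K_r$: $c_j$ is nonzero by Lemma~\ref{lem:square-norm}, and $\Pi_r(p_j)$ is a monomial in the $z_i$. Neither point is a real obstacle; the main step is just the Hensel argument, which uses that the characteristic is not $2$.
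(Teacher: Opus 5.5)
Your proposal is correct and is essentially the paper's proof: take $s_j \in W_t^\perp$ from Lemma~\ref{lem:norm-of-sj}, observe that $\Pi_r(s_j)$ is $1$ plus an element of positive valuation, extract a square root in $K_{r,v}$ by Hensel's lemma, and rescale $s_j$. Your extra checks are all sound and fill in details the paper leaves implicit: that $v$ is trivial on $k^\times$ because $k$ is algebraically closed, that the residue characteristic is not $2$, that orthogonal complements commute with completion, and that $s_j$ is well defined.
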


    \begin{proof}
        Choose $j$ so that $v(\Pi_r(v_j^\vee)\Pi_r(p_j)) < 0$. 
        It follows from Lemma~\ref{lem:norm-of-sj} that $\Pi_r(s_j) = 1 + x$, where $v(x) > 0$.
        In particular, by Hensel's lemma, $\Pi_r(s_j)$ admits a square root, say $c$, over the completion $K_{r, v}$.
        Then $\sigma^\perp_t(s_j/c) = \Pi_r(s_j/c) = 1$, so $\sigma_t^\perp$ represents $1$ over $K_{r, v}$.
    \end{proof}

    \subsection{Unramifiedness}

    \begin{theorem}
    \label{thm:unramified}
        For every $t \in k^\times$, the pullback of $\alpha_r$ to $Q_t$ is unramified. 
    \end{theorem}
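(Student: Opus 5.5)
The plan is to deduce the theorem from Corollary~\ref{cor:rep-one}, applied to the subform $\sigma_t \subset \Pi_r$. It has dimension $r+1 \geq 2$, and $Q_t$ is by definition the quadric of $\langle 1 \rangle \perp \sigma_t$. It therefore suffices to show the following: for every discrete valuation $v$ on $K_r$ at which $\alpha_r$ ramifies, $\sigma_t^\perp$ represents $1$ over $K_{r,v}$. By Corollary~\ref{cor:sigma-rep-one}, this reduces to finding an index $0 \leq j \leq r$ with $v(c_j^2\Pi_r(p_j)) < 0$, using $\Pi_r(v_j^\vee) = c_j^2$ from Lemma~\ref{lem:square-norm}. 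By Remark~\ref{rem:ramified-val-of-zj}, we may assume $v(z_i) \neq 0$ for some $i$.

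First I compute the quantities $N_j \coloneqq c_j^2 \Pi_r(p_j)$ explicitly, writing $A_j = z_1 + \cdots + z_j$. From \eqref{eq:bil-pairing-pfister} we have $\Pi_r(p_0) = z_1z_2z_3$, $\Pi_r(p_j) = z_jz_{j+1}$ for $1 \leq j < r$, and $\Pi_r(p_r) = z_1z_r$. This gives
\begin{equation*}
N_0 = \frac{z_2z_3}{z_1}, \qquad N_j = \frac{(1+A_j)^2}{z_jz_{j+1}} \ (1 \leq j < r), \qquad N_r = \frac{z_1(1+A_r)^2}{z_r}.
\end{equation*}
Write $a_i = v(z_i)$. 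Then I argue by contradiction: assume $v(N_j) \geq 0$ for all $j$, and derive either that all $a_i = 0$, or that $\alpha_r$ is unramified at $v$ for another reason.

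I would split the case analysis according to the sign pattern of the $a_i$.

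\textbf{Case $\min a_i < 0$.} Let $i_0$ be the index where the minimum is attained. Then $v(1+A_j)$ equals $\min_{i \leq j} a_i$ as soon as this minimum is negative and attained once; in general it is at least this minimum. Combining this with the inequalities $v(N_j) \geq 0$ along the cyclic chain $1 \to 2 \to \cdots \to r \to 1$ should force a contradiction. The inequality $v(N_0) \geq 0$, i.e.\ $a_2 + a_3 \geq a_1$, handles the case where the minimum sits at $z_1$.

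\textbf{Case all $a_i \geq 0$ and some $a_i > 0$.} Here the $1+A_j$ are typically units. The inequalities $2v(1+A_j) \geq a_j + a_{j+1}$ then force every consecutive pair to have valuation $0$, and $v(N_r) \geq 0$ closes the cycle. The only escape is cancellation, when $v(1+A_j) > 0$.

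I expect the main obstacle to be exactly these cancellation cases, where $1 + A_j$ has positive valuation, or where valuations of different $z_i$ cancel in $A_j$. There the naive valuation count fails. For these I would first try to prove directly that $\alpha_r$ is unramified at $v$. One route is symbol manipulation: rewrite $\alpha_r$ using relations such as $(a,b) = (a, b(a+b)/\ldots)$ and $(a,-a) = 0$, with $-1$ a square since $k$ is algebraically closed. The aim is to replace the entries by elements that are units at $v$, or to exhibit that the residue is a symbol that vanishes in the residue field.

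If this fails, the fallback is to use different vectors of $W_t^\perp$. For example, take suitable linear combinations of the $s_j$: pairwise, the $s_j$ are orthogonal up to the $W_0$-components of the $v_j^\vee$. Then show that such a combination has norm $1 + x$ with $v(x) > 0$, and conclude by Hensel's lemma as in Corollary~\ref{cor:sigma-rep-one}. Once every ramified $v$ is handled, Corollary~\ref{cor:rep-one} gives that $\alpha_r$ pulled back to $K_r(Q_t)$ is unramified, for every $t \in k^\times$.
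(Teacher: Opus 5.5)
Your reduction (Corollary~\ref{cor:rep-one} plus Corollary~\ref{cor:sigma-rep-one}, so that at each ramified $v$ it suffices to find $j$ with $v(N_j)<0$) agrees with the paper, and so do your formulas for $N_j$. The gap is in the valuation analysis.

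You only know $v(1+A_j)\geq \min\{0,a_1,\dots,a_j\}$, with equality when leading terms do not cancel. The hypothesis $v(N_j)\geq 0$ reads $2v(1+A_j)\geq a_j+a_{j+1}$. Without an \emph{upper} bound on $v(1+A_j)$, it therefore says nothing about $a_j+a_{j+1}$. So neither of your cases actually closes; your ``should force a contradiction'' is exactly the missing step. Your remedies for cancellation (ad hoc symbol manipulation, or unspecified linear combinations of the $s_j$) are not carried out, and they are not needed.

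The missing idea is that cancellation cannot occur at a valuation where $\alpha_r$ ramifies.
\begin{itemize}
\item Residues are compatible with completion, so if $\alpha_r$ is ramified at $v$, then $\alpha_r\neq 0$ over $K_{r,v}$.
\item By Theorem~\ref{thm:pfister-symbol}(1), $\Pi_r$ is then anisotropic over $K_{r,v}$. Hence so is its subform $\langle 1,z_1,\dots,z_j\rangle$, spanned by $e_\emptyset,e_1,\dots,e_j$.
\item For an anisotropic form $\langle 1,a_1,\dots,a_n\rangle$ over a complete discretely valued field of residue characteristic $\neq 2$, Hensel's lemma gives $v(1+a_1+\cdots+a_n)=\min\{0,v(a_1),\dots,v(a_n)\}$. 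This is Lemma~\ref{lem:min-valuation}. Indeed, if the sum $S$ had larger valuation than the minimal term $a_i$, then $1-S/a_i=c^2$. Replacing the $i$th coordinate of $(1,\dots,1)$ by $c$ would then give an isotropic vector.
\end{itemize}
So your instinct that the cancellation cases are ``really unramified'' is right, but this anisotropy argument is the mechanism.

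With this equality in hand, no case split is needed:
\begin{itemize}
\item For $1\leq j\leq r-1$, $v(N_j)\geq 0$ gives $a_j+a_{j+1}\leq 2\min\{0,a_1,\dots,a_j\}$. Hence $a_r\leq\cdots\leq a_2\leq -|a_1|$, and in particular $\min\{0,a_1,\dots,a_r\}=a_r$.
\item Then $v(N_r)\geq 0$ becomes $a_1+a_r\geq 0$. This forces $a_r=\cdots=a_2=-a_1$ with $a_1\geq 0$.
\item Finally, $v(N_0)\geq 0$ gives $-3a_1\geq 0$. So all $a_i=0$, contradicting Remark~\ref{rem:ramified-val-of-zj}.
\end{itemize}
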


    Using our previous results, we quickly reduce the theorem to the following proposition:

    \begin{proposition}
    \label{prop:key-prop}
        Let $v$ be a discrete valuation on $K_r$ at which $\alpha_r$ is ramified. 
        Then there exists $j$ with $0 \leq j \leq r$ such that $v(\Pi_r(v_j^\vee)\Pi_r(p_j)) < 0$.
    \end{proposition}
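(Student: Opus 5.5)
The plan is to argue by contraposition. Assume $v(\Pi_r(v_j^\vee)\Pi_r(p_j)) \geq 0$ for every $0 \leq j \leq r$, and deduce that $\alpha_r$ is unramified at $v$. The first step is to make the quantities explicit. By Lemma~\ref{lem:square-norm} we have $\Pi_r(v_j^\vee) = c_j^2$, and by \eqref{eq:bil-pairing-pfister} we have $\Pi_r(p_0) = z_1z_2z_3$, $\Pi_r(p_j) = z_jz_{j+1}$ for $1 \le j < r$, and $\Pi_r(p_r) = z_1z_r$. Writing $A_j = z_1 + \cdots + z_j$, the products become
\begin{equation*}
\frac{z_2z_3}{z_1}, \qquad \frac{(1+A_j)^2}{z_jz_{j+1}} \ (1 \le j < r), \qquad \frac{(1+A_r)^2 z_1}{z_r}.
\end{equation*}
Set $a_j = v(z_j)$ and $b_j = v(1+A_j)$. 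The hypothesis then becomes the system of inequalities
\begin{equation*}
a_2 + a_3 \ge a_1, \qquad 2b_j \ge a_j + a_{j+1} \ (1 \le j < r), \qquad 2b_r + a_1 \ge a_r.
\end{equation*}

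Next I will extract a sharper consequence of ramification than Remark~\ref{rem:ramified-val-of-zj}. If every $a_j$ is even, write $z_j = \pi^{a_j}u_j$ with $u_j$ a unit. Then $(z_1,\dots,z_r) = (u_1,\dots,u_r)$, since the factors $\pi^{a_j}$ are squares, and this symbol is unramified by \S\ref{ssec:the-definition}. Hence ramification forces some $a_j$ to be odd. It then suffices to show that the system above, together with the ultrametric inequality for $b_j$, forces every $a_j$ to be even, or better, every $a_j = 0$.

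The core is a case analysis on $m = \min_j a_j$.

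\textbf{Case $m < 0$.} Let $j$ be the first index with $a_j = m$. Then $z_j$ is the unique term of minimal valuation in $1 + A_j$, so $b_j = m$. If $j < r$, the inequality $2m \ge m + a_{j+1}$ forces $a_{j+1} = m$. Propagating this along the chain gives $a_j = \cdots = a_r = m$, and at each step I will track $b_\ell$ using the ultrametric inequality and the parity of the valuations. At $j = r$ the last inequality gives $a_1 \ge a_r - 2b_r$. I expect that combining this with $a_2 + a_3 \ge a_1$ and the minimality of $j$ yields a contradiction, or at least shows that $\alpha_r$ is unramified.

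\textbf{Case $m \ge 0$.} Here every $b_j \ge 0$. The inequalities $2b_j \ge a_j + a_{j+1}$ bound consecutive sums by $2b_j$, and $b_j > 0$ only when $1 + A_j \equiv 0$ in the residue field. I will combine these constraints with $a_2 + a_3 \ge a_1$ and the last inequality to conclude.

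The main obstacle is the possibility of cancellation in the sums $1 + A_j$, which can make $b_j$ larger than the naive minimum and weaken the inequalities. To handle it, I would first exploit the identity $1 + A_{j+1} = (1 + A_j) + z_{j+1}$. This gives a two-term ultrametric comparison between $b_j$, $b_{j+1}$ and $a_{j+1}$: equality $b_{j+1} = \min(b_j, a_{j+1})$ holds whenever $b_j \ne a_{j+1}$. Tracking these relations around the cyclic chain of conditions, which closes up through $p_r = e_{\{1,r\}}$ and $p_0 = e_{\{1,2,3\}}$, together with the parity constraint, should rule out every configuration with some $a_j$ odd. If this bookkeeping becomes unwieldy, the fallback is to reduce to the case where $v$ is a divisorial valuation on $\bP^r$ and argue directly with residues there.
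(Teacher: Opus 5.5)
\textbf{There is a genuine gap.} Your translation of the hypothesis into the inequalities $a_2+a_3\ge a_1$, $2b_j\ge a_j+a_{j+1}$ ($1\le j<r$), $2b_r+a_1\ge a_r$ is correct. The way you plan to finish cannot work, because you use ramification only through the parity statement ``some $a_j$ is odd.''

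The implication you set out to prove, that these inequalities force every $a_j$ to be even, is false for genuine valuations on $K_r$. Let $v$ be the order of vanishing at the point $(z_1,\dots,z_r)=(-1,0,\dots,0)$. Then $a_1=0$, $a_j=1$ for $j\ge 2$, and $b_j=1$ for all $j\ge 1$, since each $1+z_1+\cdots+z_j$ is a nonzero linear form vanishing at this point. All $r+1$ inequalities hold, yet $a_2=1$ is odd. This $v$ is of course unramified for $\alpha_r$: $-z_1\equiv 1$ modulo the maximal ideal, so $z_1$ is a square in $K_{r,v}$. But that fact is invisible to any bookkeeping of the numbers $a_j,b_j$ and their parities.

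The same cancellation breaks your propagation step when $m<0$. For $r=3$, take the order of vanishing at the origin in the coordinates $w=1/z_1$, $u=z_1+z_2$, $y=z_3+1$. This gives $a_1=a_2=-1$, $a_3=0$, $b_1=-1$, $b_2=0$, $b_3=1$. All four inequalities hold, although $a_3\neq m$.

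The missing idea is to use ramification in a stronger form, as the paper does. If $\alpha_r$ is ramified at $v$, then $\alpha_r\neq 0$ over $K_{r,v}$, since residues are compatible with completion. So $\Pi_r$ is anisotropic over $K_{r,v}$ by Theorem~\ref{thm:pfister-symbol}(1), and hence so is its subform $\langle 1,z_1,\dots,z_j\rangle$, spanned by $e_\emptyset,e_1,\dots,e_j$. A Hensel argument (Lemma~\ref{lem:min-valuation}) then rules out cancellation entirely: $b_j=\min\{0,a_1,\dots,a_j\}$.

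With this identity the system becomes rigid and parity is unnecessary:
\begin{itemize}
\item The inequalities for $1\le j<r$ give $a_r\le\cdots\le a_2\le -|a_1|$, so $\min\{0,a_1,\dots,a_r\}=a_r$.
\item The inequality for $j=r$ gives $a_r\ge -a_1$. This forces $a_2=\cdots=a_r=-a_1$ with $a_1\ge 0$.
\item Then $a_2+a_3\ge a_1$ gives $a_1=0$.
\end{itemize}
Hence all $a_j=0$, contradicting Remark~\ref{rem:ramified-val-of-zj}. Your fallback of ``arguing directly with residues'' would have to reconstruct exactly this anisotropy input.
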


    \begin{proof}[Proof that Proposition~\ref{prop:key-prop} implies Theorem~\ref{thm:unramified}]
        By Corollary~\ref{cor:rep-one}, it suffices to show that for every discrete valuation $v$ on $K_r$ at which $\alpha_r$ is ramified, the form $\sigma^\perp_t$ represents $1$ over $K_{r,v}$.
        By Corollary~\ref{cor:sigma-rep-one}, $\sigma_t^\perp$ represents $1$ over $K_{r,v}$ if $v(\Pi_r(v_j^\vee)\Pi_r(p_j)) < 0$.
    \end{proof}

    \begin{proof}[Proof of Proposition~\ref{prop:key-prop}]
        Assume for the sake of contradiction that $v(\Pi_r(p_j)\Pi_r(v_j^\vee)) \geq 0$ for all $j$.
        A direct computation shows that
        \begin{equation}
        \label{eq:product-of-norms}
            \Pi_r(p_j)\Pi_r(v_j^\vee) = \begin{cases}
                z_1^{-1} z_2z_3 & j = 0 \\
                z_j^{-1}z_{j + 1}^{-1}(1 + z_1 + \cdots + z_j)^2 & 1 \leq j \leq r- 1 \\
                z_1z_r^{-1}(1 + z_1 + \cdots + z_j)^2 & j = r.
            \end{cases} 
        \end{equation}
        By assumption, the valuation of any expression on the right-hand side is nonnegative; using this, we will form a system of inequalities involving the $v(z_j)$ whose only solution is the trivial one, $0 = v(z_1) = \cdots = v(z_r)$, contradicting the assumption that $\alpha_r$ is ramified by Remark~\ref{rem:ramified-val-of-zj}.

        To carry out this plan, we must first analyze the valuation of $1 + z_1 + \cdots + z_j$.
        In fact, we claim that
        \begin{equation}
        \label{eq:sum-vs-min-val}
            v(1 + z_1 + \cdots + z_j) = \min\{0, v(z_1), \dots, v(z_j)\}.
        \end{equation}
        To prove this, by a simple argument (included here as Lemma~\ref{lem:min-valuation} below), it suffices to show that $\langle 1, z_1, \dots, z_j \rangle$ is anisotropic over $K_{r, v}$, for which it suffices to prove anisotropy of $\langle 1,z_1,\dots,z_r\rangle$.
        But $\langle 1, z_1, \dots, z_r \rangle$ embeds into $\Pi_r$ as the form $\sigma_0$ discussed above.
        Since $\alpha_r$ is ramified at $v$ by assumption, its Pfister form $\Pi_r$ is anisotropic over $K_{r, v}$ by Theorem~\ref{thm:pfister-symbol}, so $\sigma_0$ is anisotropic. 

        Using \eqref{eq:sum-vs-min-val}, we now form the system of inequalities from the right-hand side of \eqref{eq:product-of-norms}.
        We first examine the case $1 \leq j \leq r - 1$ of \eqref{eq:product-of-norms}.
        Applying the valuation and using our nonnegativity assumption $v(\Pi_r(p_j)\Pi_r(v^\vee_j)) \geq 0$, we obtain 
        \begin{equation*}
            v(z_j) + v(z_{j + 1}) \leq 2 \min\{0, v(z_1), \dots, v(z_j)\} \leq 2 v(z_j).
        \end{equation*}
        From this, one easily sees (by considering both possibilities for the sign of $v(z_1)$) that
        \begin{equation}
        \label{eq:chain-of-norms}
            v(z_r) \leq v(z_{r - 1}) \leq \cdots \leq v(z_2) \leq -|v(z_1)|,
        \end{equation}
        and moreover  that
        \begin{equation*}
            \min\{0, v(z_1), \dots, v(z_r)\} = v(z_r).
        \end{equation*}

        Next, we apply the valuation  to the $j = r$ case of \eqref{eq:product-of-norms}:
        \begin{equation*}
            v(z_1)  - v(z_r) +  2 \min\{0, v(z_1), \dots, v(z_r)\} = v(z_1) + v(z_r) \geq 0,
        \end{equation*}
        so that $v(z_r) \geq -v(z_1)$.
        Returning to \eqref{eq:chain-of-norms} (and again taking into account both possibilities for the sign of $v(z_1)$), we conclude that in fact $v(z_r) = -v(z_1)$, so that 
        \begin{equation}
        \label{eq:equality-of-norms}
            v(z_r) = v(z_{r - 1}) = \cdots = v(z_2) = -v(z_1) , \quad v(z_1) \geq 0.
        \end{equation}

        Finally, using the $j = 0$ case of \eqref{eq:product-of-norms}, we find that
        \begin{equation*}
                v(z_2) + v(z_3) - v(z_1) \geq 0.
        \end{equation*}
        By \eqref{eq:equality-of-norms}, we get that $-3v(z_1) \geq 0$.
        From the previous paragraph, we know that $v(z_1) \geq 0$, so it follows that $v(z_1) = 0$.
        By \eqref{eq:equality-of-norms}, we obtain that $v(z_j) = 0$ for all $j$.
        By Remark~\ref{rem:ramified-val-of-zj}, this contradicts the assumption that $\alpha_r$ is ramified at $v$.
    \end{proof}

    \begin{lemma}
    \label{lem:min-valuation}
        Let $L$ be a field which is complete with respect to a discrete valuation $v$ of residue characteristic different from $2$, and suppose that $q = \langle 1, a_1,\dots, a_n \rangle$ is an anisotropic quadratic form over $L$.
        Then 
        \begin{equation*}
            v(1 + a_1 + \cdots + a_n) = \min\{0, v(a_1), \dots, v(a_n)\}.
        \end{equation*}
    \end{lemma}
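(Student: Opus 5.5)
Set $a_0 = 1$ and $m = \min\{v(a_0), v(a_1), \dots, v(a_n)\} = \min\{0, v(a_1), \dots, v(a_n)\}$. The plan is to prove the two inequalities separately. The inequality $v(1 + a_1 + \cdots + a_n) \geq m$ is immediate from the ultrametric inequality. So the whole content is the reverse inequality, which I would prove by contradiction: assume $v(a_0 + a_1 + \cdots + a_n) > m$ and produce a nontrivial zero of $q$ over $L$.

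Choose a uniformizer $\pi$ and write $a_i = \pi^{m} b_i$ with $v(b_i) \geq 0$. Let $S = \{ i : v(a_i) = m\}$, which is nonempty, and write $\bar b_i$ for the image of $b_i$ in the residue field $\kappa$. Dividing the sum by $\pi^m$, the assumption $v(\sum a_i) > m$ says exactly that
\begin{equation*}
    \sum_{i \in S} \bar b_i = 0 \quad \text{in } \kappa,
\end{equation*}
because the terms with $i \notin S$ reduce to zero. In other words, the vector with all coordinates equal to $1$ is a zero of the reduced form $\bar q_S = \sum_{i \in S} \bar b_i x_i^2$ over $\kappa$. Here each $\bar b_i$ with $i \in S$ is nonzero.

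The key step is to lift this zero by Hensel's lemma. Since $\operatorname{char} \kappa \neq 2$ and $\bar b_i \neq 0$ for $i \in S$, the zero $(1, \dots, 1)$ of $\bar q_S$ is nonsingular: the partial derivative in any $x_i$ with $i \in S$ is $2\bar b_i \neq 0$. Hensel's lemma for the complete field $L$ then lifts it to a zero of $\sum_{i \in S} b_i x_i^2$ over the valuation ring, with all coordinates units; concretely, fix all coordinates but one $x_{i_0}$ to be $1$ and solve for $x_{i_0}$. Setting the remaining coordinates ($i \notin S$) to zero, we obtain a nontrivial zero of $\pi^{-m} q$, hence of $q$.

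This contradicts the anisotropy of $q$, so $v(1 + a_1 + \cdots + a_n) = m$. I do not expect a real obstacle here. The only care needed is the reduction to the ``leading'' indices $S$ and the case $|S| = 1$. That case is automatically impossible: a single nonzero $\bar b_i$ cannot sum to zero, so the strict inequality already fails.
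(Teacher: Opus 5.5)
Your proposal is correct and is essentially the paper's argument. You assume $v(1+a_1+\cdots+a_n) > \min\{0, v(a_1), \dots, v(a_n)\}$ and use Hensel's lemma to take a square root of a unit congruent to $1$, which produces an isotropic vector for $q$; the only cosmetic difference is that the paper keeps every coordinate equal to $1$ and replaces one minimal-valuation coordinate $j$ by $c = \sqrt{1 - (1+a_1+\cdots+a_n)/a_j}$, whereas you first restrict to the leading indices and set the other coordinates to $0$.
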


    \begin{proof}
        Let $S = 1 + a_1 + \cdots + a_n$, and choose $j$ such that $v(a_j) = \min \{v(a_0), v(a_1), \dots, v(a_n)\}$, where by convention $a_0 = 1$.
        Assuming $v(S) > v(a_j)$, we will show that $q$ is isotropic.
        Under our assumption,
        \begin{equation*}
            1 - \frac{S}{a_j} = 1 \mod \fm,
        \end{equation*}
        where $\fm$ is the maximal ideal of the valuation ring inside of $L$.
        By Hensel's lemma, we see that $1 - S/a_j$ admits a square root, say $c$, in $L$.
        But then if we define the vector $x = (1, \dots, 1, c, 1, \dots, 1)$, where $c$ occurs at the $j$th slot, we get
        \begin{equation*}
            q(x) = \sum_{i \neq j} a_i + c^2 a_j = S + (c^2 - 1)a_j = 0.
        \end{equation*}
        In particular, $x$ is an isotropic vector, so $q$ is isotropic.
    \end{proof}

    \subsection{Proof of Theorem~\ref{thm:sigma-t-is-schr}}
    \label{ssec:proof-of-unram}

    By Theorem~\ref{thm:unramified}, the pullback of $\alpha_r$ to $K_r(Q_t)$ is unramified when $t$ is nonzero. 
    On the other hand, the pullback of $\alpha_r$ to $K_r(Q_t \cap H)$ vanishes by Theorem~\ref{thm:pfister-symbol}, since $Q_t \cap H$ is the quadric associated to the quadratic form $\sigma_t$, and $\sigma_t$ is a subform of $\Pi_r$.
    (Note that the definition of a Schreieder triple requires that $Q_t \cap H$ is smooth and integral; smoothness is automatic since $\sigma_t$ is nondegenerate, while integrality holds because $Q_t \cap H$ is smooth and has positive dimension.)
    Therefore, to prove that $(\alpha_r, Q_t, H)$ is a Schreieder triple, it remains to show that the pullback of $\alpha_r$ to $K_r(Q_t)$ is nonzero when $t$ is very general. 

    After a standard spreading out argument, a result of Gabber \cite[Appendix, Proposition A7]{CT-examples} reduces nonvanishing for very general $t$ to the case $t=0$.
    Since $\sigma_0 = \langle 1, z_1, \dots, z_r \rangle$, we see that $\langle 1 \rangle \perp \sigma_0$ is isotropic, so $K_r(Q_0)$ is purely transcendental over $K_r$.
    In particular, the pullback map on \'etale cohomology is injective, hence the pullback of $\alpha_r$ to $K_r(Q_0)$ remains nonzero.
    \qed

    \subsection{The odd-dimensional case}
    \label{ssec:odd-dimension}

    As indicated above in Remark~\ref{rem:even-vs-odd}, Theorem~\ref{thm:sigma-t-is-schr} eventually gives results only for even-dimensional hypersurfaces. 
    In this subsection, we explain a slight modification of the construction which covers the case of odd-dimensional hypersurfaces.

    \begin{definition}
        Assuming that $r \geq 4$, it is easy to see that the form $\sigma_t$ is orthogonal to the vector $e_{\{1,3\}}$ in the space underlying $\Pi_r$, and so we may define the subform
        \begin{equation*}
            \sigma_t^+ = \sigma_t \perp \langle z_1z_3 \rangle \subset \Pi_r,
        \end{equation*}
        where $\langle z_{1}z_{3} \rangle$ embeds into $\Pi_r$ via $e_{\{1,3\}}$.
        In addition, let $Q_t^+$ be the quadric associated to the augmented subform $\langle 1 \rangle \perp \sigma_t^+$.
        (If $r = 3$, then $p_r = e_{\{1,3\}}$, so one loses the orthogonality.)
    \end{definition}

    \begin{theorem}
    \label{thm:odd-dim-schr-triple}
        Let $r \geq 4$. Let $H \subset \bP_{K_r}^{r + 2}$ be the hyperplane corresponding to the orthogonal complement of $\langle 1 \rangle$. 
        For $t$ very general, the triple $(\alpha_r, Q_t^+, H)$ is a Schreieder triple.
    \end{theorem}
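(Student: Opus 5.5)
We follow the proof of Theorem~\ref{thm:sigma-t-is-schr} step by step, replacing $\sigma_t$ by $\sigma_t^+$ throughout. There are three things to check.

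\emph{Step 1: $\sigma_t^+$ is a subform.} We need $e_{\{1,3\}}$ to be orthogonal to $W_t$. Each $v_j$ lies in $W_0 = \langle e_\emptyset, e_1, \dots, e_r\rangle$, so it is orthogonal to $e_{\{1,3\}}$. The vectors $p_j$ are $e_{\{1,2,3\}}$, $e_{\{j,j+1\}}$ and $e_{\{1,r\}}$. Since $r \geq 4$, none of them equals $e_{\{1,3\}}$, and distinct basis vectors $e_I$ are orthogonal. Hence $\sigma_t^+ = \Pi_r|_{W_t \oplus \langle e_{\{1,3\}}\rangle}$ is a subform of $\Pi_r$. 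It is nondegenerate because $\Pi_r$ is anisotropic. Consequently $Q_t^+ \cap H$ is the quadric of $\sigma_t^+$, which is smooth and integral since it has positive dimension. The pullback of $\alpha_r$ to $K_r(Q_t^+\cap H)$ then vanishes by Theorem~\ref{thm:pfister-symbol}(2).

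\emph{Step 2: unramifiedness for $t \neq 0$.} By Corollary~\ref{cor:rep-one}, it suffices to show that $(\sigma_t^+)^\perp$ represents $1$ over $K_{r,v}$ whenever $\alpha_r$ ramifies at $v$. We reuse the vectors $s_j$, and the one new check is that each $s_j$ is also orthogonal to $e_{\{1,3\}}$. This holds because $s_j$ is a combination of $v_j^\vee \in W_0$ and $p_j \neq e_{\{1,3\}}$. So $s_j \in (\sigma_t^+)^\perp$, and Lemma~\ref{lem:norm-of-sj} still gives $\Pi_r(s_j) = 1 + 1/(t^2\Pi_r(v_j^\vee)\Pi_r(p_j))$. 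Proposition~\ref{prop:key-prop} depends only on $v$, $v_j^\vee$ and $p_j$, none of which changed. It therefore still supplies a $j$ with $v(\Pi_r(v_j^\vee)\Pi_r(p_j))<0$, and the Hensel argument of Corollary~\ref{cor:sigma-rep-one} then shows that $(\sigma_t^+)^\perp$ represents $1$. Thus $\alpha_r$ pulls back to an unramified class on $K_r(Q_t^+)$.

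\emph{Step 3: nonvanishing for very general $t$.} This is the step needing the most care, though it should still be routine. As before, Gabber's result \cite[Appendix, Proposition A7]{CT-examples}, applied to the family over the $t$-line, reduces nonvanishing to $t=0$. At $t=0$ we have $\langle 1\rangle\perp\sigma_0^+ = \langle 1,1,z_1,\dots,z_r,z_1z_3\rangle$. This form is isotropic over $K_r$ because $k$ is algebraically closed, so $\langle 1,1\rangle$ is hyperbolic. Hence $Q_0^+$ is smooth with a rational point, and $K_r(Q_0^+)$ is purely transcendental over $K_r$. The pullback of $\alpha_r$ is therefore injective and $\alpha_r \ne 0$ survives. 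The spreading-out hypotheses for Gabber's result (a smooth projective family over an open subset of $\bA^1_t$ containing $0$) hold because $\sigma_t^+$ is nondegenerate for every $t$, including $t=0$. Combining the three steps gives the Schreieder triple $(\alpha_r, Q_t^+, H)$.
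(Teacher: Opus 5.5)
Your proposal is correct and follows the paper's proof essentially step for step. The main check is that each $s_j$ is orthogonal to $e_{\{1,3\}}$, which holds because $r\ge 4$ forces $p_j\neq e_{\{1,3\}}$. From there you reuse Proposition~\ref{prop:key-prop} and Corollary~\ref{cor:rep-one} for unramifiedness, Theorem~\ref{thm:pfister-symbol} for the hyperplane section, and Gabber's specialization to $t=0$, where the summand $\langle 1,1\rangle$ makes $Q_0^+$ rational; you also spell out that $e_{\{1,3\}}\perp W_t$, which the paper only asserts when defining $\sigma_t^+$.
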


    \begin{proof}
        The key point is that the vectors $s_0, \dots, s_r$ are orthogonal to $\sigma_t^+$.
        Proving this just requires one to check that $s_0, \dots, s_r$ are orthogonal to $e_{\{1,3\}}$, which one can check directly.
        Once this is known, then the fact that the pullback of $\alpha_r$ to $K_r(Q_t^+)$ is unramified follows by an identical argument as in the case of $K_r(Q_t)$.
        
        The hyperplane section $Q_t^+ \cap H$ is again a smooth integral quadric, and one can prove that the pullback of $\alpha_r$ to $K_r(Q_t^+ \cap H)$ vanishes by an argument identical to the one given in \S \ref{ssec:proof-of-unram}. 
        For nonvanishing of the pullback of $\alpha_r$ to $K_r(Q_t^+)$, we reduce (by arguing as in \S \ref{ssec:proof-of-unram}) to the case $t = 0$.
        In this case, we again use the fact that the quadric $Q_0^+$ is rational over $K_r$ (since the quadratic form $\langle 1 \rangle \perp \sigma_0^+$ contains $\langle 1,1 \rangle$ as a summand), and the pullback of a nonzero cohomology class (e.g., $\alpha_r$) along a purely transcendental extension remains nonzero.
    \end{proof}

    \section{Conclusion}
    \label{sec:conclusion}

    In this section, we finish the proof of Theorem~\ref{thm:main-intro} from the introduction.

    \begin{theorem}
        Let $k$ be an uncountable algebraically closed field of characteristic not $2$. Let $X$ be a very general hypersurface of degree $d \geq 5$ in $\bP_k^{n + 1}$ for $n \geq 1$.
        Then $X$ does not admit a decomposition of the diagonal.
        In particular, $X$ is stably irrational.
    \end{theorem}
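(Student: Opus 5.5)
Combine Theorem~\ref{thm:hypersurfaces-from-schr-trip} with the Schreieder triples of Theorems~\ref{thm:sigma-t-is-schr} and \ref{thm:odd-dim-schr-triple}. Then handle the low-dimensional cases by a separate, classical argument.

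\emph{Even dimension $n = 2r \geq 6$ (so $r\ge 3$).} Take $b = r$, $K = K_r$, and the triple $(\alpha_r, Q_t, H)$ for very general $t$, with $s = r+1$, so $b + s - 1 = 2r = n$. Choose coordinates $Y_0, Y_1,\dots,Y_{r+1}$ on $\bP^{r+1}_{K_r}$ so that $Y_0$ is dual to the first summand $\langle 1\rangle$. The remaining coordinates are dual to the basis $v_0+tp_0,\dots,v_r+tp_r$ of $W_t$. Then $H=\{Y_0=0\}$ and $Q_t$ has equation
\begin{equation*}
Y_0^2 + \sum_{i,j} m_{ij}(z)\, Y_{i}Y_{j}=0,
\end{equation*}
where $(m_{ij})=M_t$ has polynomial entries of degree $\le 3$ by Lemma~\ref{lem:gram-matrix}. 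On the chart $Y_0\neq 0$, write $z_\ell = X_\ell/Y_0$ and multiply through by $Y_0^3$. This produces a homogeneous form of degree $\le 5$ in $(Y_0,\dots,Y_{r+1},X_1,\dots,X_r)$, and its zero locus is the closure of $Q_t\cap U_H$. It is irreducible since $Q_t$ is integral, so it is a hypersurface of degree $d\le 5$. Theorem~\ref{thm:hypersurfaces-from-schr-trip} then gives the claim for every degree $d'\ge 5\ge d$ in dimension $2r$.

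\emph{Odd dimension $n=2r+1\geq 9$ (so $r\ge4$).} Use $(\alpha_r,Q_t^+,H)$ from Theorem~\ref{thm:odd-dim-schr-triple}, with $s=r+2$. The Gram matrix of $\sigma_t^+$ is $M_t$ bordered by the single diagonal entry $z_1z_3$, of degree $2$. Hence the same homogenization argument gives degree $\le 5$ in $\bP^{2r+2}_k$, and dimension $2r+1$.

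\emph{Remaining small cases, $n\in\{1,\dots,5,7\}$.} These are the step needing external input, and the main obstacle is citing the right literature cleanly. For $n\le 2$ and $d\ge 5$, a decomposition of the diagonal forces $\rH^0(X,\omega_X)=0$ (more precisely, it forces the vanishing of global differential forms, valid in any characteristic). Curves of degree $\ge 3$ and surfaces of degree $\ge 4$ have nonzero canonical forms, so these cases are done. For $3\le n\le 7$, I would invoke the known bounds. Schreieder's bound $d\ge \log_2 n+2$ gives $d\ge 5$ for all $n\le 8$, in characteristic $\neq 2$ and for decomposition of the diagonal \cite{schreieder}. Indeed $\log_2 7+2<5$, so this covers $n=3,4,5,7$, and in fact it also covers $n = 6, 8$. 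This finishes the proof, together with the implication from stable rationality to decomposition of the diagonal.
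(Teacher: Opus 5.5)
Your proposal is correct and is essentially the paper's proof. The triples $(\alpha_r,Q_t,H)$ and $(\alpha_r,Q_t^+,H)$ have Gram-matrix entries of degree $\le 3$ (plus the extra diagonal entry $z_1z_3$ in the odd case), so they spread out to hypersurfaces of degree $\le 5$ covering $n=6$ and $n\ge 8$. The case $n\le 2$ follows from nonvanishing of global differential forms, and the remaining small dimensions from Schreieder's bound.

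The only difference is bookkeeping. You need Schreieder only for $n\in\{3,4,5,7\}$, where the coarse form $d\ge \log_2 n+2$ already suffices. The paper instead cites his Theorem 8.1 for all $3\le n\le 9$, though $n=9$ is in any case covered by the odd-dimensional construction with $r=4$.
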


    \begin{proof}
        First, the cases $n = 1$ and $2$ are elementary, and follow (for example) from the fact that a smooth projective variety $X$ with a decomposition of the diagonal satisfies $\rH^0(X, \Omega^i_X) = 0$ for $i > 0$, cf. \cite[Lemma 2.2]{Totaro2016}.
        The cases $n \in \{3, \dots, 9\}$ are covered by Schreieder's bound from \cite[Theorem 8.1]{schreieder}, so it suffices to consider the case when $n \geq 10$.
        In fact, our construction works for $n = 6$ and $n \geq 8$, because these are the integers which may be written as $2r$ for $r \geq 3$, or $2r + 1$ for $r \geq 4$.
        
        Fix $t \in k$ very general. We just need to show that the Schreieder triples $(\alpha_r, Q_t, H)$ and $(\alpha_r, Q_t^+, H)$ from Theorems~\ref{thm:sigma-t-is-schr} and \ref{thm:odd-dim-schr-triple} lead to bounds for hypersurfaces of degree $\geq 5$ via Theorem~\ref{thm:hypersurfaces-from-schr-trip}.
        Let $M_t = (m_{ij})$ be the Gram matrix of $\sigma_t$ in the basis $v_j + tp_j$.
        The quadrics $Q_t$ and $Q_t^+$ are given by
        \begin{equation}
        \label{eq:equations-for-qt-etc}
            X^2 + \sum_{i,j} m_{ij} Y_i Y_j = 0, \quad X^2 + \sum_{i,j} m_{ij} Y_iY_j + z_1z_3 Z^2 = 0
        \end{equation}
        in $\bP^{r + 1}_{K_r}$ and $\bP^{r + 2}_{K_r}$, respectively, where the hyperplane $H$ is given by $X = 0$ in both cases.
        
        By Lemma~\ref{lem:gram-matrix}, the coefficients $m_{ij}$ are polynomials of degree $\leq 3$ in the variables $z_1, \dots, z_r$,
        so on the chart $X = 1$, the equations \eqref{eq:equations-for-qt-etc} have total degree $\leq 5$ in the base variables $z_1, \dots, z_r$ and the fiber variables $Y_0, \dots, Y_r, Z$.
        Thus the closures of $Q_t \setminus H$ and $Q_t^+ \setminus H$ in $\bP^{2r + 1}_k$ and $\bP^{2r + 2}_k$, respectively, have degrees $\leq 5$.
        We conclude using Theorem~\ref{thm:hypersurfaces-from-schr-trip}.
    \end{proof}

    \bibliography{references}

@article{CT-examples,
  author    = {Colliot-Th{\'e}l{\`e}ne, Jean-Louis},
  title     = {Exposant et indice d'alg{\`e}bres simples centrales
               non ramifi{\'e}es},
  journal   = {L'Enseignement Math{\'e}matique (2)},
  volume    = {48},
  number    = {1--2},
  pages     = {127--146},
  year      = {2002},
  note      = {With an appendix by Ofer Gabber},
  doi       = {10.5169/seals-66070}
}

@misc{EngelDeGaayFortmanSchreieder2025,
  author        = {Engel, Philip and de Gaay Fortman, Olivier
                   and Schreieder, Stefan},
  title         = {Matroids and the integral {Hodge} conjecture
                   for abelian varieties},
  year          = {2025},
  howpublished  = {arXiv:2507.15704},
  eprint        = {2507.15704},
  archivePrefix = {arXiv},
  primaryClass  = {math.AG},
  url           = {https://arxiv.org/abs/2507.15704}
}

@article{kollar1995,
  author    = {Koll{\'a}r, J{\'a}nos},
  title     = {Nonrational hypersurfaces},
  journal   = {Journal of the American Mathematical Society},
  volume    = {8},
  number    = {1},
  pages     = {241--249},
  year      = {1995},
  doi       = {10.1090/S0894-0347-1995-1273416-8}
}

@article{lange-schreieder,
  author    = {Lange, Jan and Schreieder, Stefan},
  title     = {On the rationality problem for low degree hypersurfaces},
  journal   = {Forum of Mathematics, Pi},
  volume    = {13},
  pages     = {e25},
  year      = {2025},
  doi       = {10.1017/fmp.2025.10016}
}

@misc{moe,
  author        = {Moe, Simen Westbye},
  title         = {On stable rationality of polytopes},
  year          = {2023},
  howpublished  = {arXiv:2311.01144},
  eprint        = {2311.01144},
  archivePrefix = {arXiv},
  primaryClass  = {math.AG},
  url           = {https://arxiv.org/abs/2311.01144}
}

@article{nicaise-ottem,
  author    = {Nicaise, Johannes and Ottem, John Christian},
  title     = {Tropical degenerations and stable rationality},
  journal   = {Duke Mathematical Journal},
  volume    = {171},
  number    = {15},
  pages     = {3023--3075},
  year      = {2022},
  doi       = {10.1215/00127094-2022-0065}
}

@article{schreieder,
  author    = {Schreieder, Stefan},
  title     = {Stably irrational hypersurfaces of small slopes},
  journal   = {Journal of the American Mathematical Society},
  volume    = {32},
  number    = {4},
  pages     = {1171--1199},
  year      = {2019},
  doi       = {10.1090/jams/928}
}

@article{schreieder-torsion,
  author    = {Schreieder, Stefan},
  title     = {Torsion orders of {Fano} hypersurfaces},
  journal   = {Algebra \& Number Theory},
  volume    = {15},
  number    = {1},
  pages     = {241--270},
  year      = {2021},
  doi       = {10.2140/ant.2021.15.241}
}

@incollection{schreieder-survey,
  author    = {Schreieder, Stefan},
  title     = {Unramified cohomology, algebraic cycles and rationality},
  editor    = {Farkas, Gavril and van der Geer, Gerard
               and Shen, Mingmin and Taelman, Lenny},
  booktitle = {Rationality of varieties},
  series    = {Progress in Mathematics},
  volume    = {342},
  pages     = {345--388},
  publisher = {Birkh{\"a}user},
  address   = {Cham},
  year      = {2021},
  doi       = {10.1007/978-3-030-75421-1_13}
}

@misc{schreieder-icm,
  author        = {Schreieder, Stefan},
  title         = {Rationality of hypersurfaces},
  year          = {2025},
  howpublished  = {arXiv:2510.13679},
  eprint        = {2510.13679},
  archivePrefix = {arXiv},
  primaryClass  = {math.AG},
  note          = {To appear in the Proceedings of the International
                   Congress of Mathematicians 2026},
  url           = {https://arxiv.org/abs/2510.13679}
}

@article{Totaro2016,
  author    = {Totaro, Burt},
  title     = {Hypersurfaces that are not stably rational},
  journal   = {Journal of the American Mathematical Society},
  volume    = {29},
  number    = {3},
  pages     = {883--891},
  year      = {2016},
  doi       = {10.1090/jams/840}
}

@article{Voisin2015,
  author  = {Voisin, Claire},
  title   = {Unirational threefolds with no universal codimension 2 cycle},
  journal = {Invent. Math.},
  volume  = {201},
  number  = {1},
  year    = {2015},
  pages   = {207--237},
  doi     = {10.1007/s00222-014-0551-y}
}

@article{ColliotThelenePirutka2016,
  author  = {Colliot-Th{\'e}l{\`e}ne, Jean-Louis and Pirutka, Alena},
  title   = {Hypersurfaces quartiques de dimension 3 :
             non-rationalit{\'e} stable},
  journal = {Ann. Sci. {\'E}c. Norm. Sup{\'e}r. (4)},
  volume  = {49},
  number  = {2},
  year    = {2016},
  pages   = {371--397},
  doi     = {10.24033/asens.2285}
}

@article{Schreieder2019QuadricBundles,
  author  = {Schreieder, Stefan},
  title   = {On the rationality problem for quadric bundles},
  journal = {Duke Math. J.},
  volume  = {168},
  number  = {2},
  year    = {2019},
  pages   = {187--223},
  doi     = {10.1215/00127094-2018-0041}
}

@misc{Ottem2026,
  author        = {Ottem, John Christian},
  title         = {Very general quartic sixfolds are not stably rational},
  year          = {2026},
  howpublished  = {arXiv:2609.10231},
  eprint        = {2609.10231},
  archivePrefix = {arXiv},
  primaryClass  = {math.AG},
  url           = {https://arxiv.org/abs/2609.10231}
}

@incollection{IllusieTemkin2014,
  author    = {Illusie, Luc and Temkin, Michael},
  title     = {{Gabber}'s modification theorem (log smooth case)},
  booktitle = {Travaux de {Gabber} sur l'uniformisation locale et la
               cohomologie {\'e}tale des sch{\'e}mas quasi-excellents.
               S{\'e}minaire {\`a} l'{\'E}cole polytechnique 2006--2008},
  editor    = {Illusie, Luc and Laszlo, Yves and Orgogozo, Fabrice},
  series    = {Ast{\'e}risque},
  number    = {363--364},
  pages     = {167--212},
  year      = {2014},
  publisher = {Soci{\'e}t{\'e} Math{\'e}matique de France},
  note      = {Expos{\'e} X},
  url       = {https://www.numdam.org/item/AST_2014__363-364__167_0/}
}

@article{CTO1989,
  author  = {Colliot-Th{\'e}l{\`e}ne, Jean-Louis and Ojanguren, Manuel},
  title   = {Vari{\'e}t{\'e}s unirationnelles non rationnelles:
             au-del{\`a} de l'exemple d'{Artin} et {Mumford}},
  journal = {Invent. Math.},
  volume  = {97},
  number  = {1},
  year    = {1989},
  pages   = {141--158},
  doi     = {10.1007/BF01850658}
}

@article{HPT2018,
  author  = {Hassett, Brendan and Pirutka, Alena and Tschinkel, Yuri},
  title   = {Stable rationality of quadric surface bundles over surfaces},
  journal = {Acta Math.},
  volume  = {220},
  number  = {2},
  year    = {2018},
  pages   = {341--365},
  doi     = {10.4310/ACTA.2018.v220.n2.a4}
}

@article{Voevodsky2003,
  author  = {Voevodsky, Vladimir},
  title   = {Motivic cohomology with {$\mathbf{Z}/2$}-coefficients},
  journal = {Publ. Math. Inst. Hautes {\'E}tudes Sci.},
  volume  = {98},
  year    = {2003},
  pages   = {59--104},
  doi     = {10.1007/s10240-003-0010-6}
}
    \bibliographystyle{alpha}

\end{document}